\newcommand{\kom}[1]{}
\renewcommand{\kom}[1]{{\bf [#1]}}
\def\vint_#1{\mathchoice%
          {\mathop{\kern 0.2em\vrule width 0.6em height 0.69678ex depth -0.58065ex
                  \kern -0.8em \intop}\nolimits_{\kern -0.4em#1}}%
          {\mathop{\kern 0.1em\vrule width 0.5em height 0.69678ex depth -0.60387ex
                  \kern -0.6em \intop}\nolimits_{#1}}%
          {\mathop{\kern 0.1em\vrule width 0.5em height 0.69678ex
              depth -0.60387ex
                  \kern -0.6em \intop}\nolimits_{#1}}%
          {\mathop{\kern 0.1em\vrule width 0.5em height 0.69678ex depth -0.60387ex
                  \kern -0.6em \intop}\nolimits_{#1}}}
                  \newcommand{\aveint}[2]{\mathchoice%
          {\mathop{\kern 0.2em\vrule width 0.6em height 0.69678ex depth -0.58065ex
                  \kern -0.8em \intop}\nolimits_{\kern -0.45em#1}^{#2}}%
          {\mathop{\kern 0.1em\vrule width 0.5em height 0.69678ex depth -0.60387ex
                  \kern -0.6em \intop}\nolimits_{#1}^{#2}}%
          {\mathop{\kern 0.1em\vrule width 0.5em height 0.69678ex depth -0.60387ex
                  \kern -0.6em \intop}\nolimits_{#1}^{#2}}%
          {\mathop{\kern 0.1em\vrule width 0.5em height 0.69678ex depth -0.60387ex
                  \kern -0.6em \intop}\nolimits_{#1}^{#2}}}
\def\1{\raisebox{2pt}{\rm{$\chi$}}}
\newcommand{\dist}{\operatorname{dist}}
\newcommand{\kint}{\vint}
\newcommand{\eps}{\varepsilon}
\theoremstyle{plain}
\newtheorem{definition}{Definition}[section]
\newtheorem{proposition}[definition]{Proposition}
\newtheorem{theorem}[definition]{Theorem}
\newtheorem*{theorem*}{Main Theorem}
\newtheorem{lemma}[definition]{Lemma}
\theoremstyle{definition}
\theoremstyle{remark}
\numberwithin{equation}{section}
\begin{document}

\title[]{Tug-of-war games associated with boundary value problems involving derivatives}

\author{Jeongmin Han}
\address{Department of Mathematics, Soongsil University, 
06978 Seoul, Republic of Korea}
\email{jeongmin.han@ssu.ac.kr}


\keywords{Dynamic programming principle, stochastic game, $p$-Laplacian, viscosity solution}
\subjclass[2020]{91A05, 91A15, 35D40, 35B65}

\begin{abstract}
In this paper, we study a certain type of noisy tug-of-war game
which can be regarded as an interpretation of a certain type of
boundary value problem for the normalized $p$-Laplace equation, where $1<p<2$. 
More precisely, we will investigate the boundary regularity of the value function to the game
and its convergence to a viscosity solution of the model problem. 
\end{abstract}

\maketitle
\tableofcontents

\section{Introduction}
In this paper, we consider a certain type of stochastic game related to the following boundary value problem
\begin{align} \label{defcapf}
\left\{ \begin{array}{ll}
\Delta_{p}^{N}u=0 & \textrm{in $ \Omega$,}\\
\langle \mathbf{n}  , Du  \rangle + \gamma u = \gamma G & \textrm{on $ \partial \Omega$,}\\
\end{array} \right.
\end{align}
where $\Delta_p^{N}$ is the normalized $p$-Laplace operator given by $$\Delta_{p}^{N}u=\Delta u + (p-2)\frac{\langle D^2 u Du, Du \rangle}{|Du|^2},$$
$1<p<2$, $\gamma >0$ and  $\Omega\subset \mathbb{R}^n$ is a bounded domain.
To do this, we will first observe a mean value characterization linked to the above problem and 
construct a tug-of-war game related to the dynamic programming principle (DPP) based on the characterization
(for a heuristic description of the game setting, see Section \ref{subsec:mvc}).
In particular, we focus on the regularity and convergence results of value functions for the games associated with \eqref{defcapf}, which will be stated in Theorem \ref{bdryregext} and Theorem \ref{convrob12}.

This work can be regarded as an extension of \cite{han2024tug}.
In that paper, the author considered a game-theoretic interpretation of \eqref{defcapf} with $p>2$,
and the game is motivated by the stochastic processes in \cite{MR3011990,MR3450753,MR4684373}.
To cover the case $1<p<2$, we will present a different type of tug-of-war games:
We will use the `noisy tug-of-war game' in \cite{MR4257616} to construct the stochastic game for the model problem.
This construction enables us to use the Krylov-Safonov theory for certain types of DPPs in \cite{MR4496902,MR4565418,MR4739830}.

Our strategy to prove the main results is as follows.
For the regularity issue, we derive H\"{o}lder estimates for the interior case and the boundary case. 
The interior estimate can be shown by using the Krylov-Safonov type estimates in \cite{MR4496902,MR4565418,MR4739830}.
We need to go through several steps to obtain the boundary regularity, Theorem \ref{bdryregext} (cf. \cite{MR4684385,han2024tug}).
The key to the proof is a supermartingale argument employing the optional stopping theorem.
Constructing an appropriate supermartingale is crucial for this method  
and to this end, we present some geometric and stochastic observations relevant to our game beforehand.
Meanwhile, we will show the convergence of value functions by using the notion of viscosity solutions (cf. \cite{MR3011990}).
 
The game setting basically includes random noise when the tug-of-war occurs.
If $p>2$, we can construct the associated game with radially symmetric noise for the tug-of-war process.
This property makes us to deal with the terms caused by that noise easier than the other case.
When $1<p<2$, however, we can only consider the game with noise that is not rotationally invariant.
We need to introduce a suitable argument to derive our desired estimate in this case (see the proof of Theorem \ref{bdryregext}).

Tug-of-war and its noise-included version were first introduced in \cite{MR2451291,MR2449057}.
Since then, these stochastic games have been investigated as a game-theoretic interpretation of $p$-Laplace type operators.
General introductions to the connection between tug-of-war and the $p$-Laplacian can be found in \cite{MR4174394,MR4759243}.
The construction of stochastic games is based on mean value characterizations for solutions to the model equations.
We refer to \cite{MR2566554,MR3011990} for mean value characterizations for $p$-harmonic functions.
Especially, games associated with the normalized $p$-Laplace operators with $1<p<\infty$ were studied in \cite{MR2875296,MR3623556,MR4125101}.
In \cite{MR3182683}, probabilistic approaches are considered for the Neumann boundary value problem for fully nonlinear equations.
One can also find preceding studies \cite{MR4684373,MR4684385,han2024tug} which deal with stochastic processes
related to the boundary value problems involving derivatives for $p$-Laplace type equations.
We also mention Krylov-Safonov type regularity estimates for stochastic games and DPPs in \cite{MR4496902,MR4565418}.
Meanwhile, the boundary condition in \eqref{defcapf} is a special case of oblique derivative boundary conditions, which is given by 
$ \langle \mathbf{\beta}  , Du  \rangle + \gamma u = \gamma G $
with $|\langle \beta, \mathbf{n}\rangle | \ge \delta_{0} $ for some $\delta_0>0$.
We refer to \cite{MR3059278} for the general theory about oblique derivative boundary value problems.
Several works \cite{MR3780142,MR4223051} gave regularity results for fully nonlinear oblique derivative boundary value problems.
In \cite{MR4046185,MR4283880}, Calderon-Zygmund type theory was considered for these problems.

{\bf Acknowledgments} 
This research was supported by the Soongsil University Research Fund (New Professor Support Research) of 2025.
The author would like to thank M. Lewicka, M. Parviainen and E. Ruosteenoja for their useful discussions.

\section{Preliminaries}
\label{sec:prelim}

\subsection{Notations}
\begin{itemize}
\item For two $n$-dimensional vectors $a=(a_{1},\dots,a_{n})$ and $b=(b_{1},\dots,b_{n})$,
$$ \langle a,b \rangle = \sum_{i=1}^{n} a_{i}b_{i} .$$
\item For two $n\times n$ matrices $A=(a_{ij})$ and $B=(b_{ij})$,
$$ \langle A:B \rangle = \sum_{i,j=1}^{n} a_{ij}b_{ij}.$$
\item We write the $k$-dimensional ball with the radius $r$ as $B_{r}^{k}$, and $$B_{r,d}^{k}=B_{r}^{k}\cap \{y_{k}< d\}$$ for $0\le d\le r$.
\item $\Omega$ is a bounded $C^{1,1}$-domain in $\mathbb{R}^n$.
\item For each $x\in \Omega$, the projection of $x$ to $\partial\Omega$ will be denoted by $\pi_{\partial\Omega}x$.
\item For each $x\in \overline{\Omega}$ and given $\eps>0$, \begin{align*}d_{\eps}(x) =\min \bigg\{ 1, \frac{1}{\eps} \dist(x, \partial \Omega) \bigg\} \quad \textrm{and} \quad 
d'_{\eps}(x) =\min \bigg\{ \frac{1}{2}, \frac{1}{\eps} \dist(x, \partial \Omega) \bigg\}. \end{align*}
If there is no room for confusion, we abbreviate $d_{\eps}(x), d'_{\eps}(x)$ to $d_{x}, d'_{x}$.
\item For $r>0$ and given domain $\Omega$, we write $$\Gamma_{r}:=\{x\in \overline{\Omega}: \dist(x, \partial\Omega)\le r \}.$$
\item For a measurable set $L \subset \mathbb{R}^n$ and a measurable function $f$, we write
$$\kint_L f dx = \frac{1}{|L|}\int_L f dx.$$
\item We denote by $$E_{\eps}^{a}(z;\nu)=z+\bigg\{ y\in \mathbb{R}^{n}: \frac{\langle y, \nu\rangle^{2}}{a^{2}}+ |y-\langle y,\nu\rangle\nu|^{2}< \eps^{2} \bigg\} .$$
\end{itemize}

\subsection{Mean value characterization and game setting}
\label{subsec:mvc}
In this subsection, we consider a mean value characterization and the associated game related to the problem \eqref{defcapf} for $1<p<2$.
We can refer the readers to \cite{MR2875296,MR3623556,MR4125101}, which deal with tug-of-war games covering the same model equation. 
One can also find a study on noisy tug-of-war games in \cite{MR4257616},
which will be employed in our stochastic game setting.
This setting guarantees uniform ellipticity, and it enables us to employ the Krylov-Safonov type regularity results in \cite{MR4657420,MR4739830},
which will be introduced in the next section.
 
We basically consider mean value characterizations and corresponding stochastic games based on the noisy tug-of-war game in \cite{MR4257616}. 
Under the setting, at each point $x\in\Omega$, we will take strategies of two players on the $\eps$-sphere $\partial B_{\eps}(x)$ so that
the value function corresponds to the following DPP
\begin{align} \label{aveell}
u_{\eps}(x)=\frac{1}{2}\bigg( \sup_{z\in \partial B_{\eps}(x)} h_{u_{\eps}}^{A,a}(z;x,\eps)+  
 \inf_{z\in \partial B_{\eps}(x)} h_{u_{\eps}}^{A,a}(z;x,\eps) \bigg),
\end{align}
where
\begin{align}\label{defh}
  h_{u_{\eps}}^{A,a}(z;x,\eps): &=
 \kint_{B_{1}}u_{\eps}\bigg(z+ A \eps y + \frac{A (a-1)}{\eps}\langle y, z-x \rangle
 (z-x) \bigg) dy
\end{align}
with $ A, a >0$ satisfying
\begin{align}\label{aaqd}
\frac{n+2}{A^{2}}+a^{2}=p-1.
\end{align}
Then we can rewrite \eqref{defh} as
$$  h_{u_{\eps}}^{A,a}(z;x,\eps)= \kint_{E_{A\eps}^{a}(z;\frac{z-x}{|z-x|})}u_{\eps}(y)dy .$$
When $1<p<2$, we have to take $a<1$.
This means that the distribution of the noise when the tug-of-war occurs depends on the strategies.
Compared to the work \cite{han2024tug} for the case $p\ge 2$, it causes additional difficulties in deriving our desired regularity result 
because we cannot use a cancellation argument directly to estimate the terms related to the noise when the tug-of-war occurs.

Consider a domain $\Omega$ satisfying $B_{\eps}(x)\cap\Omega=x+B_{\eps,\eps d_{x}}^{n} $.
Then the following properties can be found in \cite[Lemma 2.1]{MR4684373} and \cite[Lemma 2.4]{MR4684373}:
\begin{align}\label{bdryintav} \kint_{B_{\eps}(x)\cap\Omega}(y-x)dy = -s_{\eps}(x)\mathbf{e}_{n},
\end{align} where 
\begin{align}\label{defs}s_{\eps}(x)= \frac{|B_{1}^{n-1}|}{(n+1)|B_{1,d_{\eps}(x)}^{n}|}\eps (1 -d_{\eps}(x)^{2} )^{\frac{n+1}{2}}
\end{align} 
and
\begin{align}\label{secorder}\kint_{B_{\eps}(x)\cap\Omega}(y-x)\otimes(y-x)dy=\frac{\eps^2}{n+2}I_n + O(\eps s_{\eps}(x)),
\end{align} 
where $I_n$ is the $n\times n$ identity matrix.

We present a mean value characterization for $p$-harmonic functions with respect to \eqref{aveell}
by employing \cite[Theorem 2.1]{MR4257616}.
\begin{lemma} \label{elltow}
Let $1<p<2 $ and $u \in C^{2}(\Omega)$ be a $p$-harmonic function with $Du(x)\neq 0$ for each $x\in \overline{\Omega}$.
Also, let $A$ and $a$ be fixed positive real numbers satisfying \eqref{aaqd}.
Then, we have
\begin{align*}
&\frac{1}{2}\bigg( \sup_{z\in \partial B_{\eps}(x)} h_{u}^{A,a}(z;x,\eps)+  
 \inf_{z\in \partial B_{\eps}(x)} h_{u}^{A,a}(z;x,\eps) \bigg)
  =u(x)+\frac{A^{2}\eps^{2}}{2(n+2)}\Delta_{p}^{N}u(x)+o(\eps^{2}).
\end{align*}
\end{lemma}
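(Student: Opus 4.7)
My plan is to combine a second-order Taylor expansion of $u$ inside the average defining $h_u^{A,a}$ with an asymptotic analysis of the supremum and infimum over unit directions.

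First I will parametrize $z = x + \eps\nu$ with $\nu \in S^{n-1}$ and set $M_\nu := I_n + (a-1)\nu\nu^{\top}$, so that the argument of $u$ in \eqref{defh} takes the form
$$x + \eps\nu + A\eps\bigl(y + (a-1)\langle y,\nu\rangle\nu\bigr) = x + \eps\bigl(\nu + A M_\nu y\bigr),$$
with displacement $O(\eps)$ uniformly in $(y,\nu)\in B_1\times S^{n-1}$. Expanding $u$ to second order around $x$ and averaging over $y\in B_1$ using $\kint_{B_1} y\,dy = 0$, $\kint_{B_1} y\otimes y\,dy = \frac{1}{n+2}I_n$, and the identity $M_\nu^2 = I_n + (a^2-1)\nu\nu^{\top}$, I expect a direct computation to give
\begin{align*}
h_u^{A,a}(z;x,\eps) &= u(x) + \eps\langle Du(x),\nu\rangle + \frac{\eps^2}{2}\Big[\langle D^2 u(x)\nu,\nu\rangle \\
&\qquad + \frac{A^2}{n+2}\bigl(\Delta u(x) + (a^2-1)\langle D^2 u(x)\nu,\nu\rangle\bigr)\Big] + o(\eps^2),
\end{align*}
with $o(\eps^2)$ uniform in $\nu$. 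The compatibility relation \eqref{aaqd} yields $1 + \frac{A^2(a^2-1)}{n+2} = \frac{A^2(p-2)}{n+2}$, so the bracket collapses to $\frac{A^2}{n+2}\bigl[\Delta u(x) + (p-2)\langle D^2 u(x)\nu,\nu\rangle\bigr]$.

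The second step is to pass to supremum and infimum over $\nu\in S^{n-1}$ and average. Since $Du(x)\ne 0$, writing $\sigma := Du(x)/|Du(x)|$, the first-order term $\eps\langle Du(x),\nu\rangle$ achieves strict extrema at $\nu = \pm\sigma$ with quadratic falloff of size $\tfrac{1}{2}\eps|Du(x)||\nu\mp\sigma|^2$. Balancing this loss against the $O(\eps^2)$ sensitivity of the second-order term in $\nu$ will force any optimizer $\nu^\pm_\eps$ to satisfy $\nu^\pm_\eps = \pm\sigma + O(\eps)$. Consequently, the first-order contribution at the optimum is $\pm\eps|Du(x)| + O(\eps^3)$, and the second-order contribution may be evaluated at $\pm\sigma$ modulo $O(\eps^3)$; the two second-order contributions agree because $\langle D^2u(x)(-\sigma),-\sigma\rangle = \langle D^2u(x)\sigma,\sigma\rangle$. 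Averaging then cancels the $\pm\eps|Du(x)|$ terms and leaves
$$u(x) + \frac{\eps^2 A^2}{2(n+2)}\bigl[\Delta u(x) + (p-2)\langle D^2u(x)\sigma,\sigma\rangle\bigr] + o(\eps^2) = u(x) + \frac{A^2\eps^2}{2(n+2)}\Delta_p^N u(x) + o(\eps^2).$$

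The main obstacle will be precisely this last sup/inf step: both the first- and second-order terms depend on $\nu$, so a priori the $O(\eps)$ optimization of the dominant linear term couples to the quadratic term. The critical input is the quadratic falloff of $\eps\langle Du(x),\nu\rangle$ at $\pm\sigma$, which confines optimizers to an $O(\eps)$ neighborhood of $\pm\sigma$ and ensures that the coupling contributes only at order $o(\eps^2)$, leaving a clean separation of linear and quadratic parts after averaging.
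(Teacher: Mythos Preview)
Your proposal is correct. The Taylor expansion computation is accurate: with $z=x+\eps\nu$ the argument of $u$ is $x+\eps(\nu+AM_\nu y)$, the averages over $B_1$ behave as you say, $M_\nu^2=I_n+(a^2-1)\nu\nu^\top$ gives the stated quadratic term, and the constraint \eqref{aaqd} collapses the coefficient of $\langle D^2u(x)\nu,\nu\rangle$ to $\tfrac{A^2(p-2)}{n+2}$. Your handling of the sup/inf is also sound: the Lipschitz dependence of the $\eps^2$-coefficient on $\nu$ together with the quadratic loss $\tfrac12\eps|Du(x)|\,|\nu\mp\sigma|^2$ in the linear term forces $\nu^\pm_\eps=\pm\sigma+O(\eps)$, and then both the first-order and second-order contributions are identified up to $O(\eps^3)$; evenness in $\nu$ of the quadratic part makes the average come out as claimed.

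As for comparison with the paper: there is essentially nothing to compare. The paper does not prove Lemma~\ref{elltow} at all but simply invokes \cite[Theorem~2.1]{MR4257616}. Your argument is precisely the standard Taylor-expansion-plus-optimization route used in that reference (and in the earlier mean-value characterizations of \cite{MR2566554,MR2875296}), so you have in effect reconstructed the cited proof. One minor remark: the hypothesis that $u$ be $p$-harmonic is not actually used in your argument (nor is it needed for the asserted identity); only $u\in C^2$ and $Du(x)\neq 0$ enter.
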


Fix $p_{0} \in (1,p)$ and set $A,a>0$ with 
\begin{align}\label{aaqdef}
\frac{n+2}{A^{2}}+a^{2}=p_{0}-1.
\end{align}  
We also recall the notation $$E_{\eps}^{a}(z;\nu)=z+\bigg\{ y\in \mathbb{R}^{n}: \frac{\langle y, \nu\rangle^{2}}{a^{2}}+ |y-\langle y,\nu\rangle\nu|^{2}< \eps^{2} \bigg\} .$$  
Now we define an operator $\mathcal{A}$ by
\begin{align} \label{defopa}\begin{split} \mathcal{A}u_{\eps}(x)&=
 \frac{\alpha}{2} \bigg\{\sup_{z\in \partial B_{Q\eps d'_{x}}(x)}\kint_{E_{QA\eps d'_{x}}^{a}(z;\nu_{0})} \hspace{-2em}u_{\eps}(y)dy+\inf_{z\in \partial B_{Q\eps d'_{x}}(x)} \kint_{E_{QA\eps d'_{x}}^{a}(z;\nu_{0})}\hspace{-2em} u_{\eps}(y)dy \bigg\}\\ & \quad + (1- \alpha)\kint_{B_{\eps}(x)\cap \Omega}u_{\eps}(y)dy,
 \end{split}
\end{align} 
where $\alpha=\frac{4(2-p)}{4(2-p)+(p-p_{0})(QA)^{2}} \in(0,1)$,
 $ \nu_{0}=\frac{z-x}{|z-x|}$ and $Q>0$ with $Q(1+A) \le 1$.
Then, \eqref{defopa} can be rewritten as
\begin{align}\label{defopaint}\begin{split} \mathcal{A}u_{\eps}(x)=&\frac{\alpha}{2} \bigg\{\sup_{z\in \partial B_{Q\eps /2}(x)}\kint_{E_{QA\eps/2}^{a}(z;\nu_{0})}u_{\eps}(y)dy+\inf_{z\in \partial B_{Q\eps/2}(x)} \kint_{E_{QA\eps/2}^{a}(z;\nu_{0})} u_{\eps}(y)dy \bigg\}\\ & \quad + (1- \alpha)\kint_{B_{\eps}(x)}u_{\eps}(y)dy\end{split}
\end{align}
 when $\dist(x,\partial \Omega) \ge \frac{\eps}{2} $.

For the boundary value problem \eqref{defcapf}, we can derive the following mean value characterization.
\begin{proposition}
Let $u \in C^{2}(\overline{\Omega})$ be a function solving the problem \eqref{defcapf} with $G \in C^{1}(\overline{\Gamma}_{r_{0}})$ for some $r_{0}>0$. Also, let $A$ and $a$ be fixed positive real numbers satisfying \eqref{aaqdef} and
$B_{\eps d_{\eps}(x) Q(1+A)}(x)\subset \Omega$.
Assume that $Du(x)\neq 0$ for each $x\in \overline{\Omega}$.
Then, we have 
\begin{align*} 
u(x) =\big( 1-(1-\alpha)\gamma_{0} s_{\eps}(x) \big) \mathcal{A}u(x) +
(1-\alpha)\gamma_{0} s_{\eps}(x) G(x)
+O(\eps s_{\eps}(x)  )+o(\eps^{2}).
\end{align*}
\end{proposition}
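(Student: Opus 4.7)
The plan is to Taylor-expand the two constituents of $\mathcal{A}u(x)$ about $x$, assemble the corrections, and match them against the Robin mean-value ansatz. The assumption $B_{\eps d_\eps(x)Q(1+A)}(x)\subset\Omega$ keeps the whole tug-of-war support inside $\Omega$, so only the boundary ball average ``sees'' the boundary. The signature $s_\eps(x)$ and the Robin data will emerge from that second piece through \eqref{bdryintav} and the boundary condition, while the $\eps^2$ corrections from both pieces should reconcile through the calibrated value of $\alpha$.

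For the tug-of-war piece I would mimic the computation behind Lemma \ref{elltow}, but with effective radius $Q\eps d'_\eps(x)$ and with $A,a$ satisfying \eqref{aaqdef}, obtaining
\[
\frac{1}{2}\bigg(\sup_{z\in\partial B_{Q\eps d'_x}(x)}+\inf_{z\in\partial B_{Q\eps d'_x}(x)}\bigg)\kint_{E_{QA\eps d'_x}^{a}(z;\nu_0)} u(y)\ud y = u(x)+\frac{(QA\eps d'_x)^2}{2(n+2)}\Delta^N_{p_0}u(x)+o(\eps^2).
\]
For the ball average, a second-order Taylor expansion of $u$ combined with \eqref{bdryintav} and \eqref{secorder} yields
\[
\kint_{B_\eps(x)\cap\Omega} u(y)\ud y = u(x)-s_\eps(x)\langle\mathbf{n},Du(x)\rangle+\frac{\eps^2}{2(n+2)}\Delta u(x)+O(\eps s_\eps(x))+o(\eps^2),
\]
where $\mathbf{n}$ is the outward unit normal at $\pi_{\partial\Omega}x$. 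Plugging the boundary condition in \eqref{defcapf} at $\pi_{\partial\Omega}x$ and exploiting $|x-\pi_{\partial\Omega}x|\le\eps$ together with the $C^2$-regularity of $u$ and $C^1$-regularity of $G$, I would rewrite $\langle\mathbf{n},Du(x)\rangle$ as $\gamma_0(G(x)-u(x))+O(\eps)$, converting the normal-derivative contribution into a Robin term $-s_\eps(x)\gamma_0(G(x)-u(x))$ plus $O(\eps s_\eps(x))$.

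Combining the two expansions with weights $\alpha$ and $1-\alpha$, and using $\Delta^N_p u=0$ to write $\Delta u=(2-p)\Delta^N_\infty u$ and $\Delta^N_{p_0}u=(p_0-p)\Delta^N_\infty u$, the second-order remainder collapses, after substituting the explicit form of $\alpha$, to
\[
\frac{(1-\alpha)(2-p)\eps^2}{2(n+2)}\bigl(1-4\, d'_\eps(x)^2\bigr)\Delta^N_\infty u(x).
\]
The main technical point is the case split this enforces: if $d'_\eps(x)=1/2$ the bracket vanishes exactly, while if $d'_\eps(x)<1/2$ one has $d_\eps(x)=d'_\eps(x)<1/2$, and \eqref{defs} then yields $s_\eps(x)\ge c\eps$ for a universal $c>0$, so the $\eps^2$ remainder is absorbed into $O(\eps s_\eps(x))$. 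This leaves
\[
\mathcal{A}u(x)-u(x)=(1-\alpha)\gamma_0 s_\eps(x)\bigl(u(x)-G(x)\bigr)+O(\eps s_\eps(x))+o(\eps^2),
\]
which rearranges into the claimed identity once $s_\eps(x)^2\le C\eps\, s_\eps(x)$ is used to absorb the quadratic cross term arising from multiplying $\mathcal{A}u(x)$ by $1-(1-\alpha)\gamma_0 s_\eps(x)$. I expect this case split---recognizing that the scaling of $d'_\eps$ is exactly what makes the interior $\Delta^N_\infty u$ residue admissible---to be the main obstacle; the rest is careful Taylor expansion and bookkeeping.
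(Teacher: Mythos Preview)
Your proposal is correct and follows essentially the same route as the paper: Taylor-expand the tug-of-war average via Lemma~\ref{elltow} at radius $Q\eps d'_x$, expand the truncated ball average using \eqref{bdryintav}--\eqref{secorder}, replace the normal derivative by the Robin data, and then perform a case split on whether $d'_\eps(x)=\tfrac12$ or $d'_\eps(x)<\tfrac12$. The paper organizes the split into three regimes ($\dist(x,\partial\Omega)\ge\eps$, $\tfrac{\eps}{2}\le\dist(x,\partial\Omega)<\eps$, $\dist(x,\partial\Omega)<\tfrac{\eps}{2}$) and in the middle case first solves for $u(x)$ as a quotient before expanding $(1+(1-\alpha)\gamma_0 s_\eps(x))^{-1}$, whereas you merge the first two regimes (both have $d'_x=\tfrac12$) and multiply out directly; your explicit residual $\tfrac{(1-\alpha)(2-p)\eps^2}{2(n+2)}(1-4(d'_x)^2)\Delta^N_\infty u(x)$ makes the cancellation mechanism more transparent than the paper's presentation, but the underlying computation is identical.
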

\begin{proof}
We first observe that
\begin{align*} \kint_{B_{\eps}(x)\cap \Omega }u(y)dy&=u(x)+\bigg\langle Du(x),  \kint_{B_{\eps}(x)\cap \Omega }(y-x)dy \bigg\rangle
\\ &\quad + \frac{1}{2}\bigg\langle D^{2}u(x):\kint_{B_{\eps}(x)\cap \Omega }(y-x)\otimes(y-x)dy\bigg\rangle + o(\eps^{2})
\end{align*}
by Taylor expansion.
We also see that 
\begin{align*}
\bigg\langle Du(x), \kint_{B_{\eps}(x)\cap \Omega}(y-x) dy \bigg\rangle
 = \gamma_{0} s_{\eps}(x)(u(\pi_{\partial \Omega}x))-G(\pi_{\partial \Omega}x)))
+ O(\eps s_{\eps}(x)  ),
\end{align*}
($\pi_{\partial \Omega}$ is the projection of $x$ on $\partial \Omega$) and
\begin{align*}
\bigg\langle D^{2}u(x):\kint_{B_{\eps}(x)\cap \Omega }(y-x)\otimes(y-x)dy\bigg\rangle
= \frac{\Delta u(x)}{n+2}\eps^{2}+O(\eps s_{\eps}(x))
\end{align*}
in \cite[Lemma 2.3]{MR4684373} and  \cite[Lemma 2.4]{MR4684373}, respectively.

Recall \eqref{defopaint}, that is,
\begin{align*}
 \mathcal{A}u(x)=&\frac{\alpha}{2} \bigg\{\sup_{z\in \partial B_{Q\eps /2}(x)}\kint_{E_{QA\eps/2}^{a}(z;\nu_{0})}u(y)dy+\inf_{z\in \partial B_{Q\eps/2}(x)} \kint_{E_{QA\eps/2}^{a}(z;\nu_{0})} u(y)dy \bigg\}\\ & \quad + (1- \alpha)\kint_{B_{\eps}(x)}u(y)dy.
\end{align*}
Then, by Lemma \ref{elltow}, we have
\begin{align} \label{add5}
\mathcal{A}u(x)=u(x)+\frac{\alpha A^{2}Q^{2}}{8(n+2)}\Delta_{p_0}^{N}u(x)\eps^{2}+  (1-\alpha)\frac{\Delta u(x)}{2(n+2)}\eps^{2}+o(\eps^{2})
=u(x)+o(\eps^{2})
\end{align}
since $\alpha=\frac{4(2-p)}{4(2-p)+(p-p_{0})(QA)^{2}} $ when $\dist(x,\partial\Omega)\ge \eps$.

Next, we assume that $\frac{\eps}{2}\le \dist(x,\partial \Omega)<\eps$.
Since $d'_{x}=\frac{1}{2}$, we obtain
\begin{align*}\mathcal{A}u(x)  &
=  u(x)+\frac{\alpha A^{2}Q^{2}}{8(n+2)}\Delta_{p_0}^{N}u(x)\eps^{2}+\frac{1-\alpha}{2(n+2)}\Delta u(x)\eps^{2}  
\\ &\quad+
(1-\alpha) \gamma_{0} s_{\eps}(x)(u(\pi_{\partial \Omega}x)-G(\pi_{\partial \Omega}x))+ O(\eps s_{\eps}(x)  )+o(\eps^{2})
\\ &=  u(x)+ (1-\alpha) \gamma_{0} s_{\eps}(x) (u(x)-G(x))+ O(\eps s_{\eps}(x)  )+o(\eps^{2}).
\end{align*}
Note that the last equation can be derived by using $u\in C^{2}(\overline{\Omega})$ and $G\in C^{1}(\Gamma_{r_{0}})$.
Therefore, we have 
\begin{align*}u(x)= & \frac{\mathcal{A}u(x)}{1+(1-\alpha)\gamma_{0} s_{\eps}(x) }
+\frac{(1-\alpha)\gamma_{0} s_{\eps}(x)}{1+(1-\alpha)\gamma_{0} s_{\eps}(x)}G(x)+ O(\eps s_{\eps}(x)  )+o(\eps^{2}).
\end{align*}
Now we observe
$$ \frac{1}{1+(1-\alpha)\gamma_{0} s_{\eps}(x) }= 1- (1-\alpha)\gamma_{0} s_{\eps}(x) + O(\eps s_{\eps}(x)),$$
and thus,
\begin{align}\label{add4}\begin{split}u(x)= & \big(1-(1-\alpha)\gamma_{0} s_{\eps}(x) \big)\mathcal{A}u(x)+ (1-\alpha)\gamma_{0} s_{\eps}(x)G(x)+ O(\eps s_{\eps}(x)  )+o(\eps^{2}).\end{split}
\end{align}  

Finally, we consider the case $\dist(x,\partial \Omega)< \frac{\eps}{2}$. 
In this case, we have
\begin{align*} \mathcal{A}u(x)&=  u(x)+\bigg(\frac{\alpha A^{2}Q^{2}(d'_x)^2}{2(n+2)}\Delta_{p_0}^{N}u(x)\eps^{2} +\frac{1-\alpha}{2(n+2)}\Delta u(x)\bigg)\eps^{2}
\\ &\quad+(1-\alpha) \gamma_{0} s_{\eps}(x)(u(\pi_{\partial \Omega}x)-G(\pi_{\partial \Omega}x)) + O(\eps s_{\eps}(x)  )
 \\ & = u(x)+ (1-\alpha) \gamma_{0} s_{\eps}(x) (u(x)-G(x)) + O(\eps s_{\eps}(x)  )
\end{align*}
because $$\bigg(\frac{\alpha A^{2}Q^{2}(d'_x)^2}{2(n+2)}\Delta_{p_0}^{N}u(x)\eps^{2}+\frac{1-\alpha}{2(n+2)}\Delta u(x)\bigg)\eps^{2}
=O(\eps^{2})=O(\eps s_{\eps}(x))$$
by using the assumption $u\in C^{2}(\overline{\Omega})$.
This yields
\begin{align}\label{add6}
u(x)= & \big(1-(1-\alpha)\gamma_{0} s_{\eps}(x) \big)\mathcal{A}u(x)+ (1-\alpha)\gamma_{0} s_{\eps}(x)G(x)+ O(\eps s_{\eps}(x)  ).
\end{align}
The proof is completed by combining \eqref{add5}, \eqref{add4} with \eqref{add6}.
\end{proof}
The DPP  
\begin{align}\label{dppext} 
u_{\eps}(x) =\big( 1-(1-\alpha)\gamma_{0} s_{\eps}(x) \big) \mathcal{A}u_{\eps}(x) +
(1-\alpha)\gamma_{0} s_{\eps}(x) G(x)
\end{align}
naturally arises from the above proposition. 
We give a heuristic explanation of our game associated with this DPP here.
Assume that the game begins at $x=:x_0\in \Omega$. 
With probability $\gamma s_{\eps}(x_0)$, the game ends right at this point.
Otherwise, each player chooses a unit vector as his or her strategy and they play a tug-of-war game with noise.
In that case, with probability $\alpha$, there is a fair coin toss 
and the token is randomly chosen in $E_{QA\eps d'_{x_0}}(x_0+Q\eps d'_{x_0};\nu) $, where $\nu$ is the coin toss winner's strategy.
Meanwhile, with probability $1-\alpha$, the token is randomly chosen in $B_{\eps}(x_0)$.
Now we write $x_1$ to be the new location of the token and repeat the above procedures.
Similarly, we can set $x_2,x_3,\dots$ until the game ends.
Let the point where the game ends be denoted by $x_{\tau}$.
The value function of this game for each player is defined by
$$ u_{I}^{\eps}(x_{0})=\sup_{S_{I}}\inf_{S_{II}}\mathbb{E}_{S_{I},S_{II}}^{x_{0}} [G(x_{\tau})]
\quad \textrm{and} \quad u_{II}^{\eps}(x_0)=\inf_{S_{II}}\sup_{S_{I}}\mathbb{E}_{S_{I},S_{II}}^{x_{0}} [G(x_{\tau})], $$
where $S_{I}$ and $S_{II}$ are the strategies for Player I and II, respectively.
If $ u_{I}^{\eps}\equiv  u_{II}^{\eps}$, we call the function the value of the game.

We can describe the game more rigorously as follows.
For each $k=1,2,\dots$, $\xi_{k}\in (0,1)$ is randomly chosen with respect to the uniform distribution, 
and $c_k \in \{0,1\}$ is given by
\begin{align*} c_{k} =
\left\{ \begin{array}{ll}
0 & \textrm{if $ \xi_{k-1} \le 1-\gamma s_{\eps}(x)  $,}\\
1 & \textrm{if $ \xi_{k-1} > 1-\gamma s_{\eps}(x)$.}\\
\end{array} \right.
\end{align*}
Next, we define a strategy as a Borel-measurable function designating the next position of the token.
For $j \in \{ I, II\}$, we set the strategy
$S_{j}^{k} \in \partial B_{ Q/2}(X_k) $ of each Player
for each $k$. 
Let 
\begin{align*} \Upsilon_{1}=(B_1)^{\mathbb{N}}\times(0,1)=\{ (w,b)
: w=\{w^{j}\}_{j=1}^{\infty}, w^{j}\in B_{1}^{n} 
\textrm{ for all } j\in \mathbb{N} , \ b\in (0,1)  \},
\end{align*}
$\mathcal{F}_1$ be the Borel $\sigma$-algebra, and $\mathbb{P}^{S_{I},S_{II}}  $  be the probability measure with respect to the strategies $S_{I}$ and $S_{II}$ defined as 
$$\mathbb{P}^{S_{I},S_{II}}(T) =\frac{\alpha}{2}\big( \kint_{E_{QA/2}^{a}(S_{I};\nu_{S_{I}})} dh + \kint_{E_{QA/2}^{a}(S_{II};\nu_{S_{II}})} dh \big) + (1-\alpha)\kint_{B_1 \cap T} dh$$
for every $T\in \mathcal{F}_1$, where $\nu_{S_{I}}=\frac{S_{I}}{|S_{I}|}$ and $\nu_{S_{II}}=\frac{S_{II}}{|S_{II}|}$.

We define $(\Upsilon,\mathcal{F},\mathbb{P}^{S_{I},S_{II}}) $ by
the countable product of the probability space $(\Upsilon_{1},\mathcal{F}_{1},\mathbb{P}_{1}^{S_{I},S_{II}}) $.
Furthermore, for each $k \in \mathbb{N}$, we also define $(\Upsilon_{k},\mathcal{F}_{k},\mathbb{P}_{k}^{S_{I},S_{II}}) $ by the product on $k$-copies of $(\Upsilon_{1},\mathcal{F}_{1},\mathbb{P}_{1}^{S_{I},S_{II}})$.
We note that $\{ \mathcal{F}_{k} \}_{n=0}^{\infty}$ with $\mathcal{F}_{0}=\{ \varnothing, \Upsilon \} $ is a filtration of $\mathcal{F}$, and $\mathcal{F}_{k} $ is identified with the sub-$\sigma$-algebra of $\mathcal{F}$
consisting of sets $A\times \prod_{i=k+1}^{\infty}\Upsilon_{1}$ for every $A \in \mathcal{F}_{k} $.
On the other hand, we set the sequence of measurable functions $\{ l_{i}^{\eps} : \Upsilon \times \overline{\Omega} \to \mathbb{N}\cup \{+\infty \} \}_{i=1}^{\infty}$ by
$$l_{i}^{\eps}(\omega, x)=\min \{ l \ge 1 : x+\eps w_{i}^{l} \in B_{\eps}(x)\cap \Omega
\} \qquad \textrm{for every } \omega \in \Upsilon, \ x \in \overline{\Omega}.$$
Then we can consider the sequence of vector-valued random variables $\{ w_{i}^{\eps, x}\}_{i=1}$ by
$$ w_{i}^{\eps, x}(\omega)=w_{i}^{l_{i}^{\eps} (\omega, x)} \qquad  \textrm{for $\mathbb{P}$-a.e. } \omega \in \Upsilon.  $$ since $l_{i}^{\eps} $ is $\mathbb{P}$-a.s. finite.

Now we consider rotations $\{\mathcal{P}_{\nu}\}$ for each $\nu\in S^{n-1}$ such that $\mathcal{P}_{\nu}\mathbf{e}_{1}=\nu$
with \begin{align} \label{conrot}
    \mathcal{P}_{\nu_{1}}\zeta-\mathcal{P}_{\nu_{2}}\zeta |\le C|\nu_{1}-\nu_{2}|
\end{align}| for any $\nu_{1},\nu_{2}\in S^{n-1}$, $\zeta \in B_{1}$ and some universal $C>0$ (this can be shown as in \cite[Lemma A.1]{MR4125101})
and a linear transformation $\mathcal{D}:\mathbb{R}^{n}\to \mathbb{R}^{n}$ to be $$\mathcal{D}\mathbf{e}_{1}=a\mathbf{e}_{1} \quad \textrm{and} \quad \mathcal{D}\mathbf{e}_{j}=\mathbf{e}_{j} \quad \textrm{for} \quad j=2,\dots,n.$$ 
Now we define the sequence of vector-valued random variables $\{ X_{k}^{\eps, x_{0}}\}_{k=0}$ by
$X_{0}^{\eps, x_{0}} \equiv x_{0}$ and 
\begin{equation}X_{k}^{\eps, x_{0}} =
\left\{ \begin{array}{llll}
X_{k-1}^{\eps, x_{0}}+Q\eps d'_{X_{k-1}}S_{I}^{k-1}+QA\eps d'_{X_{k-1}}\mathcal{P}_{S_{I}^{k-1}}\mathcal{D}w_{1}^{X_{k-1}^{\eps, x_{0}}} & \\
& \hspace{-17em}\textrm{if $0< \xi_{k-1} \le \frac{\alpha}{2}  (1-\gamma s_{\eps}(X_{k-1}))$,}\\
X_{k-1}^{\eps, x_{0}}+Q\eps d'_{X_{k-1}}S_{II}^{k-1}+QA\eps d'_{X_{k-1}}\mathcal{P}_{S_{II}^{k-1}}\mathcal{D}w_{1}^{X_{k-1}^{\eps, x_{0}}}  &  \\
& \hspace{-17em}\textrm{if $  (1-\frac{\alpha}{2}) (1-\gamma s_{\eps}(X_{k-1})) \le \xi_{k-1} <  1-\gamma s_{\eps}(X_{k-1})$,}\\
X_{k-1}^{\eps, x_{0}}+\eps w_{k}^{X_{k-1}^{\eps, x_{0}}} & \\
& \hspace{-17em} \textrm{if $  \frac{\alpha}{2} (1-\gamma s_{\eps}(X_{k-1})) < \xi_{k-1} < (1-\frac{\alpha}{2}) (1-\gamma s_{\eps}(X_{k-1})) $,}\\
X_{k-1}^{\eps, x_{0}} & \hspace{-17em}\textrm{if $1-\gamma s_{\eps}(X_{k-1})< \xi_{k-1} < 1$}\\
\end{array} \right.
\end{equation}
for each $k=1,2,\dots$ and $\gamma=\gamma_{0}(1-\alpha)$.

We can verify that the function $u_{\eps}$ indeed coincides with the value function. 
This can be done through a similar process as in \cite[Section 3]{han2024tug}.

\begin{figure}
\begin{tikzpicture}
\centering
\draw[dotted, rotate=0] (0, 0.425) ellipse (0.425cm and 0.425cm);
\draw[rotate=0] (0, 0.85) ellipse (0.15cm and 0.05cm);
\draw[rotate=45] (0.3005, 0.7255) ellipse (0.15cm and 0.05cm);
\draw[rotate=135] (0.3005, 0.1245) ellipse (0.15cm and 0.05cm);
\draw[rotate=-135] (-0.3005, 0.1245) ellipse (0.15cm and 0.05cm);
\draw[rotate=-45] (-0.3005, 0.7255) ellipse (0.15cm and 0.05cm);
\draw[rotate=0] (0, 0) ellipse (0.15cm and 0.05cm);
\draw[rotate=0] (-0.425, 0.425) ellipse (0.05cm and 0.15cm);
\draw[rotate=0] (0.425, 0.425) ellipse (0.05cm and 0.15cm);
\draw[rotate=0] (0, 0.425) ellipse (1.5cm and 1.5cm);
\draw (-3.0, 0.9) -- (3.0, 0.9) node[below]{\footnotesize $\Omega$};
\filldraw[black]  (0,0.425) circle (1pt)  node [anchor=east] {\footnotesize $x$} ;
\draw[color=blue] (0,0) -- (0, 0.425) node at (-0.2,-0.5) {\footnotesize $Q\eps d'_{x}$} ;
\draw[color=blue] (-0.32, -0.25) -- (0, 0.2);
\draw[color=red] (0,0.425) -- (1.5, 0.425) node at (1,0.28) {\footnotesize $\eps$} ;
\draw[color=blue] (-0.32, -0.25) -- (0, 0.2);
\fill[gray!20,nearly transparent] (-3,0.9) -- (-3,-1.8) -- (3,-1.8) -- (3,0.9) -- cycle;
\end{tikzpicture}
\caption{Near the boundary case. }
\label{fig1}
\end{figure}
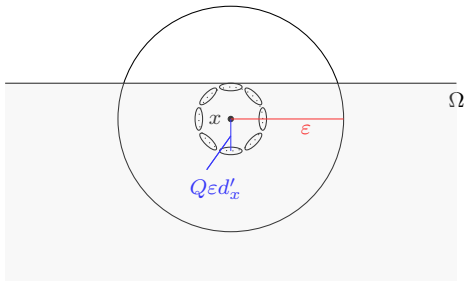

\section{Main results}
\label{sec:1p2}

In this section, we verify some properties of value functions for the game we constructed in Section \ref{sec:prelim}.
Precisely, we will present regularity (Theorem \ref{bdryregext}) and convergence (Theorem \ref{convrob12}) results for functions satisfying \eqref{dppext}.
For the regularity estimate, there have been several results for value functions of tug-of-war games by using coupling arguments, for example, \cite{MR3011990,MR3623556,MR4125101}.
In \cite{MR4684385,han2024tug}, the authors also considered regularity for the stochastic process related to boundary value problems involving derivatives.
Meanwhile, we will show the convergence of value functions when the step size $\eps$ goes to $0$.
We refer the reader to \cite{MR2875296,MR3011990,MR3623556}, for example. The author's previous result \cite[Section 5]{han2024tug} also discussed this issue for \eqref{defcapf} when $2<p<\infty$.

\subsection{Regularity} The regularity issue of our value function is important in its own right and also plays a crucial role in investigating the connection to the model problem, namely the convergence. In this subsection, we study the regularity of the value function in both the interior and boundary cases.
\subsubsection{Interior regularity}
First, we focus on the interior regularity of our value function. 
We will employ Krylov-Safonov type estimates for extremal operators in \cite{MR4496902,MR4565418} to derive the desired result, Theorem \ref{bdryregext}.  
It requires some preparation to do this before proving this.
We denote by $\mathcal{M}(B_{\Lambda})$ the set of symmetric unit Radon measures with
support in $B_{\Lambda}$.
We also set $\mu : \mathbb{R}^{n} \to \mathcal{M}(B_{\Lambda}) $ such that
$$x \mapsto \int_{B_{\Lambda}} u(x+h)d\mu_{x} (h),$$
which defines a Borel measurable function for every Borel measurable $u:  \mathbb{R}^{n} \to \mathbb{R}$.
The following notion of extremal operators can be found in \cite{MR4739830} (cf. \cite{MR4657420}).

\begin{definition}\label{exopdef}
Let $u: \mathbb{R}^{n} \to \mathbb{R}$ be a Borel measurable bounded function.
We define the extremal Pucci-type operators 
\begin{align*}
\mathcal{L}_{\eps}^{+}u(x)&
:=\frac{1}{2\eps^{2}} \bigg( \alpha \sup_{\mu \in \mathcal{M}(B_{\Lambda})} \int_{B_{\Lambda}}\delta u (x, \eps h)d\mu(h)+(1-\alpha)\kint_{B_{1}}\delta u (x, \eps h)dh\bigg)
\\ & = \frac{1}{2\eps^{2}} \bigg( \alpha \sup_{h \in B_{\Lambda}} \delta u (x, \eps h)+(1-\alpha)\kint_{B_{1}}\delta u (x, \eps h)dh\bigg)
\end{align*}
and
\begin{align*}
\mathcal{L}_{\eps}^{-}u(x)&
:=\frac{1}{2\eps^{2}} \bigg( \alpha \inf_{\mu \in \mathcal{M}(B_{\Lambda})} \int_{B_{\Lambda}}\delta u (x, \eps h)d\mu(h)+(1-\alpha)\kint_{B_{1}}\delta u (x, \eps h)dh\bigg)
\\ & = \frac{1}{2\eps^{2}} \bigg( \alpha \inf_{h \in B_{\Lambda}} \delta u (x, \eps h)+(1-\alpha)\kint_{B_{1}}\delta u (x, \eps h)dh\bigg),
\end{align*}
where $\delta u(x)=u(x+\eps h)+u(x-\eps h)-2u(x) $ for every $h \in B_{\Lambda}$.
\end{definition}
The following Krylov-Safonov type regularity result for such extremal operators
can be found in \cite{MR4565418,MR4496902}.
\begin{lemma}\label{hestpeo}
Let $\mathcal{L}_{\eps}^{+}$ and $ \mathcal{L}_{\eps}^{-}$ be the extremal operators as in Defintion \ref{exopdef} and $\rho\ge0$.
Then, there exists $\eps_{0}>0$ such that if $u$ satisfies $ \mathcal{L}_{\eps}^{+}u\ge -\rho$
and  $ \mathcal{L}_{\eps}^{-}u\le \rho$ in $B_{R}$ where $\eps <\eps_{0}R$,
there exist $C>0,\sigma \in (0,1)$ such that
\begin{align*}
|u(x)-u(z)|\le \frac{C}{R^{\sigma}}\big( ||u||_{L^{\infty}(B_{R})}+R^{2}\rho \big)
(|x-z|^{\sigma}+\eps^{\sigma})
\end{align*} 
for every $x,z\in B_{R/2}$.
\end{lemma}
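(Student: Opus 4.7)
The plan is to follow the Krylov-Safanov strategy adapted to the discrete/nonlocal setting of \cite{MR4565418,MR4496902}. The operators $\mathcal{L}_{\eps}^{\pm}$ combine a uniformly elliptic extremal piece (the sup/inf over symmetric unit Radon measures supported in $B_{\Lambda}$) with a fixed averaging piece over $B_{1}$; together these encode a two-scale uniform ellipticity condition which should deliver a discrete Hölder estimate stable as $\eps\to 0$. The target is an oscillation decay of the form $\operatorname{osc}_{B_{r}}u\le \theta\operatorname{osc}_{B_{2r}}u + Cr^{2}\rho$ for all $\eps\lesssim r\le R/2$ and some universal $\theta\in(0,1)$, from which the stated Hölder estimate with step-size correction $\eps^{\sigma}$ follows by standard iteration.

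The first main ingredient I would establish is a discrete Alexandrov-Bakelman-Pucci type bound: for $u$ with $\mathcal{L}_{\eps}^{+}u\ge -\rho$ in $B_{R}$, the maximum of the negative part $u^{-}$ over $B_{R/2}$ is controlled by a discrete $L^{n}$-quantity of $\rho$ summed over $\eps$-cubes touching the contact set of the concave envelope of $u$. The key observation is that at a contact point $x$ the second differences $\delta u(x,\eps h)$ cannot be too negative; combined with a Hessian-bound argument on the envelope, after discretising the image of the normal map over a grid of size $\eps$, one recovers an ABP estimate up to an additive $O(\eps)$ defect. This is carried out in the cited references; here I would simply verify that the specific convex combination appearing in Definition \ref{exopdef} satisfies the hypotheses of that ABP lemma, which amounts to checking that the averaging part $(1-\alpha)\kint_{B_{1}}\delta u\, dh$ contributes a nondegenerate positive definite "discrete Hessian" to the operator.

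From the discrete ABP I would then derive the standard Krylov-Safanov measure-estimate: if $u\ge 0$ in $B_{2R}$ with $\inf_{B_{R}}u\le 1$ and $\mathcal{L}_{\eps}^{-}u\le \rho$ with $\rho$ small, then the superlevel set $\{u\le M\}\cap B_{R}$ occupies a definite fraction of $B_{R}$ for a universal $M$. Iterating this via a discrete Calderón-Zygmund cube decomposition, stopping at cubes of side $\sim \eps$, yields a weak $L^{\sigma_{0}}$-type decay estimate. Applying this to $u-\inf_{B_{2r}}u$ and $\sup_{B_{2r}}u-u$ alternately and using symmetry gives the oscillation decay, and the $\eps^{\sigma}$ term in the statement arises exactly from the fact that the iteration cannot proceed below the scale $\eps$.

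The step I expect to be most delicate is the discrete ABP estimate, specifically the quantitative interplay between the sup over $\mu\in\mathcal{M}(B_{\Lambda})$ and the average over $B_{1}$. Pointwise second differences $\delta u(x,\eps h)$ can be controlled by the operator only through these two averaging mechanisms, and one must be careful that neither the large range $B_{\Lambda}$ of the extremal piece nor the smaller support $B_{1}$ of the averaging piece causes a loss in the constants depending badly on $\alpha$; this is precisely what forces the smallness condition $\eps<\eps_{0}R$. Once the ABP step is secured, the remaining arguments are routine adaptations of the classical proof, and the constants $C$ and $\sigma$ depend only on $n,\alpha,\Lambda$.
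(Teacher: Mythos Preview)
The paper does not give its own proof of this lemma: it is stated as a known result and attributed to \cite{MR4565418,MR4496902} (Arroyo--Blanc--Parviainen). So there is no in-paper argument to compare against; the correct ``proof'' here is simply the citation.

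Your sketch is a faithful outline of the strategy actually carried out in those references: a discrete ABP-type estimate for the extremal operators, a measure-to-uniform (weak Harnack / $L^{\sigma_0}$) estimate obtained by iterating ABP via a Calder\'on--Zygmund decomposition stopped at scale $\eps$, and then the standard oscillation-decay iteration producing the $\eps^{\sigma}$ defect. Your identification of the discrete ABP step as the delicate point, and of the role of the $(1-\alpha)$ averaging part as the source of uniform ellipticity, is accurate. As a write-up for this paper, however, the appropriate move is to do what the paper does: cite the result rather than reprove it, since a full proof of the discrete ABP and the stopping-time covering argument is lengthy and already available in the cited works.
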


Recall the function $u_{\eps}$ the solution to the DPP
\begin{align*}\mathcal{A}u_{\eps}(x)=&\frac{\alpha}{2} \bigg\{\sup_{z\in \partial B_{Q\eps /2}(x)}\kint_{E_{QA\eps/2}^{a}(z;\nu_{0})}u_{\eps}(y)dy+\inf_{z\in \partial B_{Q\eps/2}(x)} \kint_{E_{QA\eps/2}^{a}(z;\nu_{0})} u_{\eps}(y)dy \bigg\}\\ & \quad + (1- \alpha)\kint_{B_{\eps}(x)}u_{\eps}(y)dy.
\end{align*}
We consider two ellipsoids $$E_{1}^{a}(0;\nu)=\bigg\{ y\in \mathbb{R}^{n}: \frac{\langle y, \nu\rangle^{2}}{a^{2}}+ |y-\langle y,\nu\rangle\nu|^{2}< 1 \bigg\}$$ and 
$$E_{1}^{a}(0;-\nu)=\bigg\{ y\in \mathbb{R}^{n}: \frac{\langle y, \nu\rangle^{2}}{a^{2}}+ |y+\langle y,\nu\rangle\nu|^{2}< 1 \bigg\}.$$
Then, one can check that the function $u_{\eps}$ satisfies the assumption of Lemma \ref{hestpeo} by introducing a measure 
$$\mu_{\nu}(L)=\frac{1}{2}\bigg( \frac{|E_{1}^{a}(\nu;0)\cap L|}{|E_{1}^{a}(\nu;0)|}+\frac{|E_{1}^{a}(-\nu;0)\cap L|}{|E_{1}^{a}(-\nu;0)|}\bigg) $$ for each measurable set $L$ 
and an operator 
\begin{align*}
\mathcal{I}_{\eps}^{\nu}u_{\eps}(x)=\alpha\kint_{E_{Q\eps/2}^{a}(x+\frac{QA\eps}{2} \nu;\nu)}u_{\eps}(y)dy+ (1- \alpha)\kint_{B_{\eps}(x)}u_{\eps}(y)dy
\end{align*}
for each unit vector $\nu$.
We state this as a proposition.
\begin{proposition}
Let $1<p<2$ and $u_{\eps}$ be a bounded Borel measurable function satisfying \eqref{dppext}.
Then we have $\mathcal{L}_{\eps}^{+}u_{\eps}\ge 0$ and $\mathcal{L}_{\eps}^{-}u_{\eps}\le 0$
for some $0<\alpha<1$ depending on $n,p$, where $ \mathcal{L}_{\eps}^{+}$ and $\mathcal{L}_{\eps}^{-}$ are the extremal operators as in Definition \ref{exopdef}. 
\end{proposition}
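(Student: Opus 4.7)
The plan is to test the definition of $\mathcal{L}_\eps^\pm$ against the symmetric probability measures $\mu_\nu$ already at hand, and then let the DPP identity collapse the result to zero. First I would restrict to points $x$ with $\dist(x,\partial\Omega)\ge \eps$, so that $s_\eps(x)=0$ and \eqref{dppext} reduces via \eqref{defopaint} to the clean interior form $u_\eps(x)=\tfrac{\alpha}{2}(\sup_\nu J(\nu)+\inf_\nu J(\nu))+(1-\alpha)\kint_{B_\eps(x)} u_\eps$, where $J(\nu):=\kint_{E_{QA\eps/2}^a(x+(Q\eps/2)\nu;\nu)}u_\eps$ after parametrizing $z\in\partial B_{Q\eps/2}(x)$ by its direction $\nu\in S^{n-1}$.

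The second step is to rewrite the two averages appearing inside $\mathcal{L}_\eps^\pm u_\eps(x)$ in terms of $J$ and the ball average. Symmetry of $B_1$ gives $\kint_{B_1}\delta u_\eps(x,\eps h)\,dh=2(\kint_{B_\eps(x)}u_\eps-u_\eps(x))$. For the other term I would choose the scaling of $\mu_\nu$ so that, after $y=x+\eps h$, its two pieces of support land exactly on the ellipsoids indexing $J(\nu)$ and $J(-\nu)$, yielding $\int u_\eps(x+\eps h)\,d\mu_\nu(h)=\tfrac{1}{2}(J(\nu)+J(-\nu))$. Combined with the symmetry $h\mapsto -h$ built into $\mu_\nu$, this gives the clean identity $\int \delta u_\eps(x,\eps h)\,d\mu_\nu(h)=J(\nu)+J(-\nu)-2u_\eps(x)$.

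The crux is then a single elementary observation: if $\nu^*$ realizes $\sup_\nu J(\nu)$ (which exists by continuity of $J$ on $S^{n-1}$), then trivially $J(-\nu^*)\ge \inf_\nu J(\nu)$, hence $J(\nu^*)+J(-\nu^*)\ge \sup J+\inf J$. Using $\mu_{\nu^*}\in\mathcal{M}(B_\Lambda)$ as a competitor in the supremum defining $\mathcal{L}_\eps^+$ and substituting the two identities above, I obtain
\[
2\eps^2\mathcal{L}_\eps^+u_\eps(x)\ge \alpha(\sup_\nu J+\inf_\nu J)+2(1-\alpha)\kint_{B_\eps(x)}u_\eps -2u_\eps(x)=2u_\eps(x)-2u_\eps(x)=0,
\]
where the last equality is the interior DPP. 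The matching inequality $\mathcal{L}_\eps^-u_\eps(x)\le 0$ follows by the dual choice: take $\nu^{**}$ attaining $\inf_\nu J(\nu)$ and use $J(-\nu^{**})\le \sup_\nu J$.

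The main technical obstacle I expect is the scale bookkeeping for $\mu_\nu$: one needs to verify that its two unit-scale pieces of support $E_1(\pm\nu;0)$, when rescaled by $\eps$ in the change of variable $y=x+\eps h$, align with the ellipsoids $E_{QA\eps/2}^a(x\pm(Q\eps/2)\nu;\nu)$ appearing in $\mathcal{A}$, and that $\Lambda$ can be chosen as a universal constant (the slack $Q(1+A)\le 1$ exists precisely to give this margin). For $x$ with $\dist(x,\partial\Omega)<\eps$, the DPP carries the extra prefactor $(1-(1-\alpha)\gamma_0 s_\eps(x))$ and the $G$-term from \eqref{dppext}; since $s_\eps(x)=O(\eps)$ and $u_\eps,G$ are bounded, this perturbation is of lower order at the $\eps^{-2}$ scale of the extremal operators and can be absorbed, so the same argument carries through once the truncated ball $B_\eps(x)\cap\Omega$ replaces $B_\eps(x)$ in the ball-average identity.
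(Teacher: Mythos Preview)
Your interior argument is correct and is exactly the route the paper sketches: pick the symmetric competitor $\mu_{\nu^*}$, use the identity $\int\delta u_\eps(x,\eps h)\,d\mu_\nu(h)=J(\nu)+J(-\nu)-2u_\eps(x)$ together with $J(\nu^*)+J(-\nu^*)\ge\sup J+\inf J$, and collapse via the DPP. The paper gives no further detail beyond naming $\mu_\nu$ and the auxiliary operator $I_\eps^\nu$, so your writeup is precisely the verification the paper leaves to the reader.

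Your boundary paragraph, however, is wrong and should be dropped. When $\dist(x,\partial\Omega)<\eps$ the correction $(1-\alpha)\gamma_0 s_\eps(x)\big(G(x)-\mathcal{A}u_\eps(x)\big)$ is $O(\eps)$, so after the $\eps^{-2}$ normalization in $\mathcal{L}_\eps^\pm$ it is $O(\eps^{-1})$, not lower order; in addition the DPP ball average is over $B_\eps(x)\cap\Omega$ rather than $B_\eps(x)$, and $\mathcal{L}_\eps^\pm u_\eps(x)$ itself is not even defined there because it samples $u_\eps$ on all of $B_{\Lambda\eps}(x)\not\subset\overline{\Omega}$. None of this matters for the application: the proposition is only invoked through Lemma~\ref{intlipext}, where $B_{2R}(x_0)\subset\Omega$ and $\eps<\eps_0 R$ force $\dist(x,\partial\Omega)\ge R>\eps$ for every $x\in B_R(x_0)$. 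So the proposition is implicitly an interior statement, and the first half of your argument already proves it.
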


The following lemma follows from Lemma \ref{hestpeo}.
\begin{lemma}\label{intlipext}Let $\Omega$ be a bounded domain and $B_{2r}(x_{0})\subset \Omega$ for some $R>0$.
Then, for the function satisfying \eqref{aveell}, it holds 
\begin{align} \label{intestext} |u_{\eps}(x)-u_{\eps}(z)| \le C||u||_{L^{\infty}(B_{2R})}\bigg( \frac{|x-z|^{\sigma}}{R^{\sigma}}+
\frac{\eps^{\sigma}}{R^{\sigma}} \bigg) 
\end{align} for each $x,z\in B_{R}(x_{0})$. Here $C>0$ and $\sigma\in (0,1)$ are independent of $\eps$.
\end{lemma}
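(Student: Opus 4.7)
The proof is essentially a direct application of the preceding (unlabeled) proposition together with Lemma \ref{hestpeo}. Since $B_{2R}(x_0)\subset \Omega$, for every $x\in B_{2R}(x_0)$ the distance to $\partial\Omega$ exceeds $2R-|x-x_0|$, so as soon as $\eps$ is small compared to $R$ one has $s_{\eps}\equiv 0$ on this ball, and the DPP reduces to the purely interior form $u_{\eps}=\mathcal{A}u_{\eps}$. The proposition stated just above the lemma then delivers
\[
\mathcal{L}_{\eps}^{+} u_{\eps} \ge 0 \quad \text{and} \quad \mathcal{L}_{\eps}^{-} u_{\eps} \le 0 \qquad \text{in } B_{2R}(x_0),
\]
which is exactly the hypothesis of Lemma \ref{hestpeo} with $\rho=0$.

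Applying that lemma on the ball $B_{2R}(x_0)$ (i.e., taking $2R$ in place of $R$), which is legitimate provided $\eps < 2\eps_{0}R$, I obtain constants $C>0$ and $\sigma\in(0,1)$ independent of $\eps$ such that
\[
|u_{\eps}(x)-u_{\eps}(z)| \le \frac{C}{(2R)^{\sigma}}\,\|u_{\eps}\|_{L^{\infty}(B_{2R}(x_0))}\,\bigl(|x-z|^{\sigma}+\eps^{\sigma}\bigr)
\]
for all $x,z\in B_{R}(x_0)$, which is exactly \eqref{intestext} after absorbing the factor $2^{\sigma}$ into $C$. The complementary regime $\eps \ge 2\eps_{0}R$ is handled trivially by the crude bound $|u_{\eps}(x)-u_{\eps}(z)|\le 2\|u_{\eps}\|_{L^{\infty}(B_{2R}(x_0))}$, since in this regime $(\eps/R)^{\sigma}\gtrsim 1$ and one can enlarge $C$ accordingly.

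There is no substantial obstacle: all the hard analytic work has already been absorbed into Lemma \ref{hestpeo}, and the task of matching the measure-valued noise structure of our interior DPP to the Pucci-type extremal framework of Definition \ref{exopdef} has already been carried out in the preceding proposition via the measures $\mu_{\nu}$ and operators $I_{\eps}^{\nu}$. The present lemma is simply the transport of those two facts onto a fixed interior ball, together with the trivial case analysis in $\eps$ relative to $R$.
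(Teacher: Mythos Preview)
Your proposal is correct and takes essentially the same approach as the paper, which simply states that the lemma ``follows from Lemma \ref{hestpeo}'' after the preceding proposition. You have supplied the natural details---reducing to the interior DPP, invoking the extremal inequalities, and handling the large-$\eps$ regime---that the paper leaves implicit. One small imprecision: your claim that $s_{\eps}\equiv 0$ on all of $B_{2R}(x_0)$ is not quite right near the boundary of that ball (where $2R-|x-x_0|$ can be arbitrarily small), but this is harmless since the proposition already delivers $\mathcal{L}_{\eps}^{\pm}u_{\eps}\gtrless 0$ for functions satisfying \eqref{dppext} without needing $s_{\eps}=0$, or alternatively one can apply Lemma \ref{hestpeo} on a slightly smaller ball.
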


\subsubsection{Boundary regularity}
Now we are concerned with the boundary regularity.
We assume that $\Omega$ satisfies the $C^{1,1}$-boundary regularity condition.
Then we observe that $\Omega$ also satisfies the interior ball condition with the radius $\rho$. This means, for each $x \in \partial\Omega$,
there exists a ball $B_{\rho}(z) \subset \Omega$ for some $z \in \Omega$ and $\rho >0$
with $x \in \partial B_{\rho}(z) $. 
In that case, for $0<r<\rho$, we observe that for every $y \in \Omega_{r}$, its projection $\pi_{\partial \Omega}(y)$ to $\partial \Omega$  is uniquely defined.
Thus one can find a point $z_{0} \in \Omega$ such that $ B_{\rho}(z_{0}) \subset \Omega$ and $\pi_{\partial \Omega}(y)\in \partial B_{\rho}(z_{0})  $,
and hence, it is possible to define a function $Z^{\rho}: I_{r} \to \Omega$ with
$Z^{\rho}(y)=z_{0} $.
We also set $\overline{\tau}^{\eps,\rho,h,x_{0}}, S_{k}^{\eps , x_{0}}$ to be
\begin{align}\label{deftaus}
    \overline{\tau}^{\eps,\rho,h,x_{0}}=\min\{ k\ge 0 : |Z^{\rho}(X_{k})-X_{k}|<\rho- h \} \quad \textrm{and} \quad S_{k}^{\eps , x_{0}}= \sum_{j=1}^{k}s_{\eps}(X_{k}) . 
\end{align}

We go through several steps to derive boundary regularity estimates for the value functions.
We will use the optional stopping theorem to obtain our main result. 
To do this, we need to construct a proper submartingale by considering alternative stochastic games as in preceding results such as \cite{MR3011990,han2024tug}.
The following geometric observation is necessary to get an estimate of stopping times for alternative games (cf. \cite[Lemma 4.5]{han2024tug}).
  
\begin{lemma}  \label{mcs1infext}
Let $\Omega$ be a domain satisfying the $C^{1,1}$-boundary regularity condition with the radius $\rho>0$.
Fix $r \in (0,\frac{\rho}{2})$ and $x_{0}\in I_{r}$.
Then there exists a constant $C_{0}>0$ depending on $n, q, \rho$ and $\alpha$ such that for any small
$\eps>0$ and a unit vector $\nu=\frac{x_{0}-y_{0}}{|x_{0}-y_{0}|}$,
\begin{align} \label{lpmo1infext} \begin{split}
\frac{\alpha}{2}\kint_{E_{QA\eps d'_{x_{0}}}^{a}(x_{0}+Q\eps d'_{x_{0}}\nu;\nu)}\hspace{-5.5em}|z-y_{0}|^{-q}& dz + \frac{\alpha}{2}\kint_{E_{ QA\eps d'_{x_{0}}}^{a}(x_{0}-Q\eps d'_{x_{0}}\nu;\nu)}\hspace{-5.5em}|z-y_{0}|^{-q}dz + (1-\alpha)
\kint_{B_{\eps}(x_{0})\cap \Omega}\hspace{-1.5em}|z-y_{0}|^{-q}dz \\ & \ge |x_{0}-y_{0}|^{-q}+C_{0}(s_{\eps}(x_{0})+\eps^{2}), \end{split}
\end{align}
where $q> \frac{n-1}{a}-1 $ and $y_{0}=Z^{\rho}(x_{0}) $. 
\end{lemma}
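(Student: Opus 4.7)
The plan is to Taylor expand the smooth test function $g(z):=|z-y_0|^{-q}$ around $z=x_0$ to second order and evaluate the three averages on the left of \eqref{lpmo1infext} separately. Set $R_0:=|x_0-y_0|$; since $x_0\in I_r$ with $r<\rho/2$, we have $R_0\in(\rho/2,\rho]$. The interior ball condition places $y_0$ on the inward normal line through $\pi_{\partial\Omega}x_0$, so $\nu=(x_0-y_0)/R_0$ coincides with the outward normal to $\partial\Omega$ at this tangency point. In local coordinates adapted to this normal so that \eqref{bdryintav} holds with $\nu=\mathbf{e}_n$, a direct computation gives $Dg(x_0)=-qR_0^{-q-1}\nu$ and $D^2g(x_0)=qR_0^{-q-2}\bigl[(q+2)\nu\otimes\nu-I_n\bigr]$.

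The two ellipsoidal integration domains $E^{a}_{QA\eps d'_{x_0}}(x_0\pm Q\eps d'_{x_0}\nu;\nu)$ are reflections of one another through $x_0$, so their symmetric sum cancels every odd Taylor moment, and the first-order correction comes entirely from the ball part. Applying \eqref{bdryintav} to $Dg(x_0)$ produces
$$(1-\alpha)\bigl\langle Dg(x_0),\,-s_\eps(x_0)\nu\bigr\rangle = q(1-\alpha)R_0^{-q-1}s_\eps(x_0)\ge c_1\,s_\eps(x_0).$$
For the second order, the centroid shift of each ellipsoid contributes $(Q\eps d'_{x_0})^2\,\nu\otimes\nu$ to the covariance about $x_0$, and the uniform law on an ellipsoid with semi-axes $(aQA\eps d'_{x_0},QA\eps d'_{x_0},\dots,QA\eps d'_{x_0})$ has covariance $\frac{(QA\eps d'_{x_0})^2}{n+2}\bigl[(a^2-1)\nu\otimes\nu+I_n\bigr]$; by \eqref{secorder} the ball part contributes $\frac{\eps^2}{n+2}I_n+O(\eps\,s_\eps(x_0))$. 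Contracting with $D^2g(x_0)$ and simplifying, the $\eps^2$-coefficient of the second-order correction reduces to
$$\frac{qR_0^{-q-2}}{2(n+2)}\Bigl\{(1-\alpha)(q-n+2)+\alpha Q^2(d'_{x_0})^2\bigl[(n+2)(q+1)+A^2\bigl((q+1)a^2-(n-1)\bigr)\bigr]\Bigr\}.$$

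The main obstacle is to show that this bracket is bounded below by a strictly positive constant independent of $\eps$ and $d'_{x_0}$ under the hypothesis $q>(n-1)/a-1$. The summand $(1-\alpha)(q-n+2)$ is positive as soon as $q>n-2$, which is implied by the hypothesis because $a<1$ in the regime $1<p<2$ under consideration. For the remaining ellipsoidal summand I would rewrite $(n+2)(q+1)+A^2[(q+1)a^2-(n-1)]=(q+1)(n+2+A^2a^2)-A^2(n-1)$, invoke the parameter identity $\frac{n+2}{A^2}+a^2=p_0-1$ together with the bound $Q(1+A)\le 1$, and combine with $\alpha=\frac{4(2-p)}{4(2-p)+(p-p_0)(QA)^2}$ and $d'_{x_0}\le 1/2$ to reduce positivity to the stated threshold on $q$. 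Once the first- and second-order contributions are controlled, the Taylor remainder of order $o(\eps^2)+O(\eps\,s_\eps(x_0))$ is absorbed into $C_0(s_\eps(x_0)+\eps^2)$ for all sufficiently small $\eps>0$, completing the proof.
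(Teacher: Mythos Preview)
Your plan and the paper's proof share the same core idea---a second-order Taylor expansion of $g(z)=|z-y_0|^{-q}$---but the organization differs in a way that matters. The paper does \emph{not} expand all three averages about $x_0$. Instead it (i) quotes the ball inequality $\kint_{B_\eps(x_0)\cap\Omega}g\ge g(x_0)+C_1(s_\eps(x_0)+\eps^2)$ directly from \cite[Lemma~4.5]{han2024tug}, and (ii) expands each ellipsoidal average about its own center $\tilde x=x_0\pm Q\eps d'_{x_0}\nu$. In step (ii) the zeroth-order terms give $\tfrac12\bigl[(R_0+\delta)^{-q}+(R_0-\delta)^{-q}\bigr]\ge R_0^{-q}$ by convexity of $t\mapsto t^{-q}$, while the second-order correction about $\tilde x$ is a positive multiple of $aq+a-n+1$, which is exactly where the threshold $q>(n-1)/a-1$ enters. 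Hence the ellipsoid half already satisfies $\ge R_0^{-q}+o(\eps^2)$, and the entire gain $C_0(s_\eps+\eps^2)$ is supplied by the cited ball estimate alone. This decoupling is what makes the hypothesis on $q$ appear transparently.

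Your combined $\eps^2$-bracket is computed correctly, but the positivity argument you sketch has a gap. After applying \eqref{aaqdef} your ellipsoidal summand becomes $\alpha(QAd'_{x_0})^2\bigl[(q+1)(p_0-1)-(n-1)\bigr]$, and the hypothesis $q>(n-1)/a-1$ does \emph{not} force this to be non-negative: from $a^2<p_0-1<1$ one cannot order $a$ against $p_0-1$ (for instance $p_0=1.5$, $a^2=0.4$ gives $a\approx0.63>p_0-1=0.5$). Trading off against the ball term does not save you either, since at $d'_{x_0}=\tfrac12$ the combined requirement becomes $q>(n-1)/(p-1)-1$, which again need not follow from $q>(n-1)/a-1$. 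The clean way to see your bracket is bounded below by $(1-\alpha)(q-n+2)>0$ is precisely the paper's decomposition: the convexity-plus-second-order argument about $\tilde x$ shows the ellipsoidal contribution to the bracket is itself non-negative, after which the ball term carries the strict positivity. Your direct parameter-identity route does not close as stated.
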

\begin{proof}
One can check that
\begin{align}\label{fote2}
&\kint_{B_{\eps}(x_{0})\cap \Omega}|z-y_{0}|^{-q}dz
\ge |x_{0}-y_{0}|^{-q}+ C_{1}(s_{\eps}(x_{0})+\eps^{2})
\end{align}  
(see the proof of \cite[Lemma 4.5]{han2024tug}). Hence, we only need to get an estimate for
\begin{align*} &\frac{1}{2}\kint_{E_{QA\eps d'_{x_{0}}}^{a}(x_{0}+Q\eps d'_{x_{0}}\nu;\nu)} \hspace{-5.5em}|z-y_{0}|^{-q}  dz + \frac{1}{2}\kint_{E_{QA\eps d'_{x_{0}}}^{a}(x_{0}-Q\eps d'_{x_{0}}\nu;\nu)}\hspace{-5.5em} |z-y_{0}|^{-q}dz.
\end{align*}

Recalling the definition, we see that
\begin{align*}
&E_{QA\eps d'_{x_{0}}}^{a}(x_{0}+Q\eps d'_{x_{0}}\nu;\nu)\\&=x_{0}+Q\eps d'_{x_{0}}\nu+\bigg\{ y\in \mathbb{R}^{n}: \frac{\langle y, \nu\rangle^{2}}{a^{2}}+ |y-\langle y,\nu\rangle\nu|^{2}< (QA\eps d'_{x_{0}})^{2} \bigg\}     
\end{align*}and 
\begin{align*}
&E_{QA\eps d'_{x_{0}}}^{a}(x_{0}-Q\eps d'_{x_{0}}\nu;\nu)\\&=x_{0}-Q\eps d'_{x_{0}}\nu+\bigg\{ y\in \mathbb{R}^{n}: \frac{\langle y, \nu\rangle^{2}}{a^{2}}+ |y-\langle y,\nu\rangle\nu|^{2}< (QA\eps d'_{x_{0}})^{2} \bigg\}     
\end{align*}
Set $\phi(z)=|z-y_{0}|^{-q}$.
We observe that for $\tilde{x} =x_{0}+Q\eps d'_{x_{0}}\nu,$ we have
$$  \phi(\tilde{x})=  \big(|x_{0}-y_{0}|+Q\eps d'_{x_{0}} \big)^{-q},$$
$$ D\phi(\tilde{x})= -q\big(|x_{0}-y_{0}|+Q\eps d'_{x_{0}} \big)^{-q-2}(x_{0}-y_{0}+Q\eps d'_{x_{0}}\nu)$$ and
\begin{align*}& D^{2}\phi(\tilde{x})
\\ & =q\big(|x_{0}-y_{0}|+Q\eps d'_{x_{0}} \big)^{-q-4}\\ & \quad \times \big( (q+2)(x_{0}+Q\eps d'_{x_{0}}\nu-y_{0})\otimes
(x_{0}+Q\eps d'_{x_{0}}\nu-y_{0})-(|x_{0}-y_{0}|+Q\eps d'_{x_{0}})^{2}I_{n} \big) 
.
\end{align*}
We also get
\begin{align*}
&\kint_{E_{QA\eps d'_{x_{0}}}^{a}(\tilde{x};\nu)} |z-y_{0}|^{-q}  dz
\\ & =
\big(|x_{0}-y_{0}|+Q\eps d'_{x_{0}} \big)^{-q}+\kint_{E_{QA\eps d'_{x_{0}}}^{a}(\tilde{x};\nu)}\frac{1}{2}\langle D^{2}\phi (\tilde{x})(z-\tilde{x}),z-\tilde{x} \rangle dz + o(\eps^{2})
\end{align*}
and
\begin{align*}
&\kint_{E_{QA\eps d'_{x_{0}}}^{a}(\tilde{x};\nu)}\langle D^{2}\phi (\tilde{x})(z-\tilde{x}),z-\tilde{x} \rangle dz 
\\ & = 
 \bigg\langle D^{2}\phi(\tilde{x}) :\kint_{E_{QA\eps d'_{x_{0}}}^{a}(\tilde{x};\nu)} \hspace{-1em} (z-\tilde{x})\otimes(z-\tilde{x})dz\bigg\rangle
\\  &= q\big(|x_{0}-y_{0}|+Q\eps d'_{x_{0}} \big)^{-q-4}(QA\eps d'_{x_{0}})^{2}\times\\ & \ \bigg\langle  
(q+2)(x_{0}+Q\eps d'_{x_{0}}\nu-y_{0})\otimes
(x_{0}+Q\eps d'_{x_{0}}\nu-y_{0})-(|x_{0}-y_{0}|+Q\eps d'_{x_{0}})^{2}I_{n}
 : V \bigg\rangle ,
\end{align*}
where
$V$ is an $n\times n$ matrix with eigenvalues $a$ for $\nu$ and $1$ for other all eigenvectors.
Thus, we have
\begin{align*}
&\kint_{E_{QA\eps d'_{x_{0}}}^{a}(\tilde{x};\nu)}\langle D^{2}\phi (\tilde{x})(z-\tilde{x}),z-\tilde{x} \rangle dz 
\\ & =  q(aq+a-n+1)\big(|x_{0}-y_{0}|+Q\eps d'_{x_{0}} \big)^{-q-2}(QA\eps d'_{x_{0}})^{2}.
\end{align*}

Similarly, we get
\begin{align*} & \kint_{E_{QA\eps d'_{x_{0}}}^{a}(x_{0}-Q\eps d'_{x_{0}}\nu;\nu)} |z-y_{0}|^{-q}dz
\\ & =
\big(|x_{0}-y_{0}|-Q\eps d'_{x_{0}} \big)^{-q}+ q(aq+a-n+1)\big(|x_{0}-y_{0}|-Q\eps d'_{x_{0}} \big)^{-q-2}(QA\eps d'_{x_{0}})^{2}\\& \qquad+ o(\eps^{2}).
\end{align*}
Hence, we observe that
\begin{align*} &\frac{1}{2}\kint_{E_{QA\eps d'_{x_{0}}}^{a}(x_{0}+Q\eps d'_{x_{0}}\nu;\nu)}\hspace{-5.5em} |z-y_{0}|^{-q}  dz + \frac{1}{2}\kint_{E_{QA\eps d'_{x_{0}}}^{a}(x_{0}-Q\eps d'_{x_{0}}\nu;\nu)} \hspace{-5.5em}|z-y_{0}|^{-q}dz
\\ & =\frac{1}{2}\big( \big(|x_{0}-y_{0}|+Q\eps d'_{x_{0}} \big)^{-q}+\big(|x_{0}-y_{0}|-Q\eps d'_{x_{0}} \big)^{-q}\big) 
\\ & \quad +\frac{ q(aq+a-n+1)(QA\eps d'_{x_{0}})^{2}}{2}\\ & \qquad \qquad \times \big( \big(|x_{0}-y_{0}|+Q\eps d'_{x_{0}} \big)^{-q-2}+\big(|x_{0}-y_{0}|-Q\eps d'_{x_{0}} \big)^{-q-2}\big).
\end{align*}
Note that we have $aq+a-n+1>0 $ by the assumption $q>\frac{n-1}{a}-1 $.
Since the functions $\varphi_{1}(t)=t^{-q}$ and $\varphi_{2}(t)=t^{-q-2}$ are convex,
we obtain that 
\begin{align*} &\frac{1}{2}\kint_{E_{QA\eps d'_{x_{0}}}^{a}(x_{0}+Q\eps d'_{x_{0}}\nu;\nu)}\hspace{-5.5em} |z-y_{0}|^{-q}  dz + \frac{1}{2}\kint_{E_{QA\eps d'_{x_{0}}}^{a}(x_{0}-Q\eps d'_{x_{0}}\nu;\nu)}\hspace{-5.5em} |z-y_{0}|^{-q}dz
\\ & \ge |x_{0}-y_{0}|^{-q}+o(\eps^{2}).
\end{align*}
Combining this with \eqref{fote2}, we get the desired result.
\end{proof}
The above lemma implies a corresponding result to \cite[Lemma 4.6]{han2024tug}.

\begin{lemma}\label{esttaus1inf}Let $\overline{r}<r $ with $r$ as above and $q> \frac{n-1}{a}-1 $ be fixed.
Assume that $|x_{0}-Z^{\rho}(x_{0})  |>\rho-h$ for $h \in (0, \frac{\overline{r}}{2}-\eps)$ and we fix the strategy $S_{II}^{\ast}$ to pull toward $Z_{\rho}(X_{k})$ for Player II.
Then for every small $\eps>0$, $x_{0}\in \overline{\Omega}$, we have
\begin{align}
\sup_{S_{I}}\mathbb{E}_{S_{I},S_{II}^{\ast}}^{x_{0}}[\eps^{2}\overline{\tau}^{\eps,\rho,h,x_{0}}+S_{\overline{\tau}^{\eps,\rho,h,x_{0}}}^{\eps , x_{0}}] \le C\overline{r}^{-q-1}(h+\eps)
\end{align}
for some constant $C$ depending on $n, \alpha, q $ and $\rho$.
\end{lemma}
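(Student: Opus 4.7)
The plan is to apply the optional stopping theorem to a submartingale built from the barrier $\phi(x) := |x - y_0|^{-q}$, where $y_0 := Z^{\rho}(x_0)$. The key input is Lemma \ref{mcs1infext}, which realizes the averaging operator $\mathcal{A}\phi$ as superharmonic for $\phi$ with positive drift of order $\eps^2 + s_{\eps}$. To lift this to the game dynamics, one first argues that $X_k$ stays within a neighborhood of $x_0$ where $Z^{\rho}(X_k)$ equals $y_0$ up to an $o(\eps^2)$ correction: the per-step displacement is bounded by $\eps (Q(1+A) \vee 1) \le \eps$ and the $C^{1,1}$ regularity of $\partial\Omega$ gives Lipschitz control of the projection. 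With Player II's strategy $S_{II}^{\ast}$ pulling toward $y_0$, the decisive observation is that for any Player I strategy $S_I^k$, the radial monotonicity of $\phi$ around $y_0$ forces Player I's worst choice to be $\nu := (X_k - y_0)/|X_k - y_0|$ -- this places Player I's ellipsoid center as far from $y_0$ as possible. This configuration is precisely the one treated in Lemma \ref{mcs1infext}, yielding for every $S_I^k$:
\begin{equation*}
\mathbb{E}_{S_I, S_{II}^{\ast}}[\phi(X_{k+1}) \mid \mathcal{F}_k] \ge \phi(X_k) + c_0 (\eps^2 + s_{\eps}(X_k)),
\end{equation*}
with $c_0 > 0$ depending on $n, q, \rho, \alpha$ (after absorbing the factor $1 - \gamma s_{\eps}(X_k)$ for $\eps$ small).

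Consequently, $N_k := \phi(X_k) - c_0(\eps^2 k + S_k^{\eps, x_0})$ is a submartingale under $(S_I, S_{II}^{\ast})$ for any $S_I$. Applying optional stopping at $\overline{\tau} \wedge K$ and letting $K \to \infty$ by monotone convergence, one obtains
\begin{equation*}
c_0 \, \mathbb{E}_{S_I, S_{II}^{\ast}}^{x_0}\bigl[\eps^2 \overline{\tau}^{\eps,\rho,h,x_0} + S_{\overline{\tau}^{\eps,\rho,h,x_0}}^{\eps, x_0}\bigr] \le \mathbb{E}[\phi(X_{\overline{\tau}})] - \phi(x_0).
\end{equation*}
For the right-hand side, $|x_0 - y_0| > \rho - h$ gives $\phi(x_0) < (\rho - h)^{-q}$, and since each step displaces $X_k$ by at most $c\eps$, at stopping time we get $|X_{\overline{\tau}} - y_0| \ge \rho - h - c\eps$. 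The mean value theorem applied to $t \mapsto t^{-q}$ on $[\rho - h - c\eps, \rho]$ then gives $\phi(X_{\overline{\tau}}) - \phi(x_0) \le C(h+\eps)\overline{r}^{-q-1}$, where the factor $\overline{r}^{-q-1}$ absorbs the bound on the derivative (using $\overline{r} \le \rho$). Taking the supremum over $S_I$ on the left produces the claimed inequality. The hypothesis $h < \overline{r}/2 - \eps$ ensures that the interval on which we apply the mean value theorem stays away from zero.

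The main obstacle is establishing the submartingale inequality uniformly in Player I's strategy. In the $p \ge 2$ case of \cite{han2024tug} the noise was rotationally symmetric ($a = 1$), permitting a direct cancellation argument; here $a < 1$ forces the noise ellipsoid axes to align with the chosen direction, so the inequality must be extracted from the radial monotonicity of $\phi$ together with the Taylor expansion inside Lemma \ref{mcs1infext}. A secondary technical point is the drift of $Z^{\rho}(X_k)$ along the trajectory, which is handled by exploiting the $C^{1,1}$ regularity of $\partial\Omega$ together with the uniform per-step bound on the token displacement.
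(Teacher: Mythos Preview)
Your overall plan---build a submartingale from the barrier $|x - y|^{-q}$ and apply optional stopping---is the right one, and matches the paper. The gap is in your choice of a \emph{fixed} center $y_0 = Z^{\rho}(x_0)$. The paper instead works with the \emph{moving} center, setting
\[
Q_k = |X_k - Z^{\rho}(X_k)|^{-q} - C_0\bigl(k\eps^2 + S_k^{\eps, x_0}\bigr),
\]
and this is not a cosmetic difference. Your assertion that ``$X_k$ stays within a neighborhood of $x_0$ where $Z^{\rho}(X_k)$ equals $y_0$ up to an $o(\eps^2)$ correction'' is unjustified and in fact false: before $\overline{\tau}$ the token is only confined to the strip $\{\dist(\cdot,\partial\Omega)\le h\}$ and can drift tangentially along $\partial\Omega$ by $O(\eps\cdot\overline{\tau})$, which is typically of order $\eps^{-1}$. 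The Lipschitz bound on the projection only controls the per-step drift of $Z^{\rho}(X_k)$ by $O(\eps)$; it does nothing to prevent accumulation. Once $Z^{\rho}(X_k)$ is far from $y_0$, Lemma~\ref{mcs1infext} no longer yields the $s_\eps$ gain (that gain comes from the boundary truncation of $B_\eps(X_k)\cap\Omega$ in the direction of $Z^{\rho}(X_k)$, not $y_0$), Player~II's strategy in the lemma statement pulls toward $Z^{\rho}(X_k)$ rather than $y_0$ so the configuration of Lemma~\ref{mcs1infext} is not realized, and your terminal bound $|X_{\overline{\tau}} - y_0| \ge \rho - h - c\eps$ is unavailable since the stopping rule is stated in terms of $|X_{\overline{\tau}} - Z^{\rho}(X_{\overline{\tau}})|$.

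The paper handles the change of center in one line via the interior-ball inequality
\[
|y - Z^{\rho}(X_k)| \;\ge\; |y - Z^{\rho}(y)| \;=\; \rho - \dist(y,\partial\Omega)
\qquad\text{for every } y \in B_\eps(X_k)\cap\Omega,
\]
which, since $t\mapsto t^{-q}$ is decreasing, gives $|X_{k+1}-Z^{\rho}(X_{k+1})|^{-q} \ge |X_{k+1}-Z^{\rho}(X_k)|^{-q}$. One can then apply Lemma~\ref{mcs1infext} at $X_k$ with its own center $Z^{\rho}(X_k)$ to obtain the submartingale property of $Q_k$, and the stopping-time bound follows directly from the definition of $\overline{\tau}$ together with the mean-value estimate for $t\mapsto t^{-q}$. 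What you label a ``secondary technical point'' is in fact the crux; rework the argument with the moving center and this inequality.
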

\begin{proof}

Recall the definition \eqref{deftaus}: $\overline{\tau}^{\eps,\rho,h,x_{0}}=\min\{ k\ge 0 : |Z^{\rho}(X_{k})-X_{k}|<\rho- h \} $ and  $S_{k}^{\eps , x_{0}}= \sum_{j=1}^{k}s_{\eps}(X_{k}) $. 
For simplicity, we write $ \overline{\tau}^{\eps,\rho,h,x_{0}}$ by $ \overline{\tau}$.
Let $q>\max\{n-2, \frac{n-1}{a}-1 \}$ be fixed and $C_{0}>0$ be the constant in Lemma \ref{mcs1infext}.
We set 
$$ Q_{k}= |X_{k}-Z^{\rho}(X_{k})|^{-q}-C_{0}(k\eps^{2}+S_{k}^{\eps,x_{0}}) $$
for $k=0,1,\dots$.

Next, we show thaht $\{ Q_{k} \}_{k=0}^{\overline{\tau}}$ is a submartingale.
First observe that
\begin{align*}
&\sup_{S_{I}}\mathbb{E}_{S_{I},S_{II}^{\ast}}^{x_{0}}[ Q_{k+1} - Q_{k} | \mathcal{F}_{k}]
\\ & =\sup_{S_{I}}\mathbb{E}_{S_{I},S_{II}^{\ast}}^{x_{0}}[ |X_{k+1}-Z^{\rho}(X_{k+1})|^{-n} | \mathcal{F}_{k}] - |X_{k}-Z^{\rho}(X_{k})|^{-n}- C_{0}(\eps^{2}+s_{\eps}(X_{k})) 
\end{align*}
 for each $k < \overline{\tau}$.
From the interior ball condition of $\Omega$, 
we also see that
$$ \bigg|X_{k}-\eps d'_{X_{k}}\frac{X_{k}-Z^{\rho}(X_{k})}{|X_{k}-Z^{\rho}(X_{k})|}\bigg|=|X_{k}-Z^{\rho}(X_{k})|-\eps d'_{X_{k}}$$
and 
$$ \sup_{\nu\in B_{1}}\big|X_{k}+\eps d'_{X_{k}}\nu - Z^{\rho}\big(X_{k}+d'_{X_{k}}\nu \big)\big|=|X_{k}-Z^{\rho}(X_{k})|+\eps d'_{X_{k}}$$
(we have used $h<\frac{\overline{r}}{2}-\eps<\rho$).
On the other hand, it can also be checked that for each $y \in B_{\eps}(X_{k})\cap \Omega$,
$$  |y-Z^{\rho}(X_{k})| \ge |y-Z^{\rho}(y)| = \rho - \dist (y, \partial \Omega).$$
Therefore,  for any strategy $S_{I}$,
\begin{align*}
&\mathbb{E}_{S_{I},S_{II}^{\ast}}^{x_{0}}[ |X_{k+1}-Z^{\rho}(X_{k+1})|^{-n} | \mathcal{F}_{k}]
\\ & \ge \frac{\alpha}{2}\kint_{E_{QA\eps d'_{X_{k}}}^{a}(X_{k}+Q\eps d'_{X_{k}}\nu;\nu)}\hspace{-8em}\big(|z-Z^{\rho}(X_{k})|-\eps d'_{X_{k}}\big)^{-q} dz + \frac{\alpha}{2}\kint_{E_{ QA\eps d'_{X_{k}}}^{a}(X_{k}+Q\eps d'_{X_{k}}\nu;\nu)}\hspace{-8em}\big(|z-Z^{\rho}(X_{k})|+\eps d'_{X_{k}}\big)^{-q} dz 
\\ & \qquad + (1-\alpha)
\kint_{B_{\eps}(X_{k})\cap \Omega}|z-Z^{\rho}(X_{k})|^{-n}dz 
\\ & \ge  |X_{k}-Z^{\rho}(X_{k})|^{-n}+C_{0}(\eps^{2}+s_{\eps}(X_{k})) ,
\end{align*}
where $\nu =\frac{X_{k}-Z^{\rho}(X_{k})}{|X_{k}-Z^{\rho}(X_{k})|} $
by Lemma \ref{mcs1infext}.
This implies that $Q_{k}$ is a supermartingale. 

Since $|Q_{k}|<(\overline{r}-h-\eps)^{-n}$, we can apply the optional stopping theorem.
Hence, for fixed $k\ge 1$, we have
\begin{align*}
\overline{r}^{-n}\le 
\mathbb{E}_{S_{I},S_{II}^{\ast}}^{x_{0}}[Q_{0}] &
\le \mathbb{E}_{S_{I},S_{II}^{\ast}}^{x_{0}}[Q_{\overline{\tau} \wedge k}] 
\\ & \le (\overline{r}-h-\eps)^{-n} - C_{0} \mathbb{E}_{S_{I},S_{II}^{\ast}}^{x_{0}}[(\overline{\tau} \wedge k)\eps^{2}+S_{\overline{\tau} \wedge k}^{\eps,x_{0}}] 
\end{align*}
and this implies
\begin{align*}
\mathbb{E}_{S_{I},S_{II}^{\ast}}^{x_{0}}[\overline{\tau}\eps^{2}+S_{\overline{\tau} }^{\eps,x_{0}}] \le  C\overline{r}^{-n-1}(h+\eps)
\end{align*}
for every strategy $S_{I}$ and some $C=C(n,\rho ,\alpha)$.
The proof is completed by taking the supremum over $S_{I}$.
\end{proof}

Here we state our main theorem which gives boundary H\"{o}lder regularity for functions satisfying the DPP \eqref{dppext} (cf. \cite[Theorem 8.1]{MR4684385} and \cite[Theorem 4.4]{han2024tug}).

\begin{theorem}\label{bdryregext}
Let $u_{\eps}$ be the function satisfying \eqref{dppext}. 
Then there exists $\delta_{0}\in(0,1)$ such that for every $\delta \in (0,\delta_{0})$, $x_{0},y_{0} \in \overline{\Omega}$ 
with $|x_{0}-y_{0}|\le \delta$ and $$\dist (x_{0},\partial \Omega),\dist (y_{0},\partial \Omega)\le \delta^{1/2},$$ 
and some $\sigma \in (0,1)$ in Lemma \ref{intlipext}, we have
\begin{align*}
|u_{\eps}(x_{0})-u_{\eps}(y_{0})|\le C   ||G||_{L^{\infty}(\Gamma_{\eps})}\delta^{\sigma/2} 
\end{align*}
with some uniform $C>0$ depending on $n, \alpha, \gamma, \sigma$ and $\Omega$ and $\eps << \delta$.
\end{theorem}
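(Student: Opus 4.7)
\medskip
\noindent\textbf{Proof plan.} I would bridge each of $x_{0}, y_{0}$ to a well-chosen interior anchor via the auxiliary stochastic game analyzed in Lemma \ref{esttaus1inf}, and then close the gap between the two anchors with the interior Hölder estimate of Lemma \ref{intlipext}. Throughout one may assume $\|u_{\eps}\|_{L^{\infty}(\overline{\Omega})} \le \|G\|_{L^{\infty}(\Gamma_{\eps})}$ by the maximum principle inherent in the DPP \eqref{dppext}.

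Set $h := c\delta^{1/2}$ and choose $\tilde{x}, \tilde{y} \in \Omega$ at distance exactly $h$ from $\partial\Omega$ along the inward unit normals at $\pi_{\partial\Omega}(x_{0})$ and $\pi_{\partial\Omega}(y_{0})$, respectively. The $C^{1,1}$ regularity of $\partial\Omega$ yields $|\pi_{\partial\Omega}(x_{0}) - \pi_{\partial\Omega}(y_{0})| \le C\delta$ and bounds the variation of the inward normals, so $|\tilde{x} - \tilde{y}| \le C\delta$. Applying Lemma \ref{intlipext} on $B_{2R}(\tilde{x})$ with $R = h/4$ then produces
\[
|u_{\eps}(\tilde{x}) - u_{\eps}(\tilde{y})| \le C\|G\|_{L^{\infty}(\Gamma_{\eps})}\Bigl( \delta^{\sigma/2} + (\eps/\delta^{1/2})^{\sigma} \Bigr),
\]
and the second summand is absorbed by the first under the standing hypothesis $\eps \ll \delta$.

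The core step is to show $|u_{\eps}(x_{0}) - u_{\eps}(\tilde{x})| \le C\|G\|_{L^{\infty}(\Gamma_{\eps})}\delta^{\sigma/2}$, and symmetrically for the pair $y_{0}, \tilde{y}$. To this end, fix Player II's strategy as $S_{II}^{*}$: pull toward $Z^{\rho}(X_{k})$, and set $\bar{\tau} = \bar{\tau}^{\eps,\rho,h,x_{0}}$. Lemma \ref{esttaus1inf} furnishes $\mathbb{E}[\eps^{2}\bar{\tau} + S_{\bar{\tau}}^{\eps,x_{0}}] \le C\delta^{1/2}$, whence $P(\tau \le \bar{\tau}) \le \gamma\,\mathbb{E}[S_{\bar{\tau}}^{\eps,x_{0}}] \le C\delta^{1/2}$. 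Combining the optional stopping theorem with the DPP \eqref{dppext} at the time $\tau \wedge \bar{\tau}$, and taking Player I's best and worst strategies in turn, should yield the two-sided estimate
\[
\Bigl|u_{\eps}(x_{0}) - \mathbb{E}\bigl[u_{\eps}(X_{\bar{\tau}})\mathbf{1}_{\bar{\tau} < \tau} + G(X_{\tau})\mathbf{1}_{\tau \le \bar{\tau}}\bigr]\Bigr| \le C\|G\|_{\infty}\delta^{1/2}.
\]
The boundary-absorption term is bounded by $\|G\|_{\infty}P(\tau \le \bar{\tau}) \le C\|G\|_{\infty}\delta^{1/2} \le C\|G\|_{\infty}\delta^{\sigma/2}$, while on $\{\bar{\tau} < \tau\}$ the point $X_{\bar{\tau}}$ lies at distance at least $h$ from $\partial\Omega$, so Lemma \ref{intlipext} at scale $R \sim \delta^{1/2}$ controls $|u_{\eps}(X_{\bar{\tau}}) - u_{\eps}(\tilde{x})|$. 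A triangle inequality combining the three estimates then gives the theorem.

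The main obstacle is precisely the last comparison: one must control the spatial deviation of $X_{\bar{\tau}}$ from $\tilde{x}$ at the scale $\delta^{1/2}$ for Lemma \ref{intlipext} to output a factor $\delta^{\sigma/2}$. For $1 < p < 2$ the ellipsoidal noise $E_{QA\eps}^{a}(\cdot;\nu)$ is anisotropic ($a < 1$), so the straightforward cancellation used for $p > 2$ in \cite{han2024tug} is not directly available. The choice of exponent $q > \frac{n-1}{a} - 1$ in Lemma \ref{mcs1infext} is tailored to absorb this anisotropy, and the weighted supermartingale $|X_{k} - Z^{\rho}(X_{k})|^{-q} - C_{0}(k\eps^{2} + S_{k}^{\eps,x_{0}})$ constructed in the proof of Lemma \ref{esttaus1inf} is the right object to propagate: its optional stopping simultaneously controls $\bar{\tau}$ and the spatial concentration of $X_{\bar{\tau}}$ around $\tilde{x}$, which closes the bridging step and completes the proof.
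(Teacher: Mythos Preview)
Your proposal has a genuine gap in the ``core step,'' namely in the claim that the radial submartingale of Lemma \ref{esttaus1inf} controls the spatial concentration of $X_{\overline{\tau}}$ around the fixed anchor $\tilde{x}$. The quantity $|X_{k}-Z^{\rho}(X_{k})|$ equals $\rho-\dist(X_{k},\partial\Omega)$, so the submartingale $|X_{k}-Z^{\rho}(X_{k})|^{-q}-C_{0}(k\eps^{2}+S_{k}^{\eps,x_{0}})$ only measures distance to the boundary; it says nothing about tangential displacement. With Player II pulling toward $Z^{\rho}(X_{k})$ the tangential drift is governed by the diffusive scaling of the random walk and the ellipsoidal noise, so from $\mathbb{E}[\eps^{2}\overline{\tau}]\lesssim\delta^{1/2}$ one only gets $\mathbb{E}[|X_{\overline{\tau}}-x_{0}|]\lesssim\delta^{1/4}$. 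Since $X_{\overline{\tau}}$ sits at depth $h\sim\delta^{1/2}$, the largest admissible radius in Lemma \ref{intlipext} is $R\sim\delta^{1/2}$, and then $|X_{\overline{\tau}}-\tilde{x}|^{\sigma}/R^{\sigma}\sim\delta^{-\sigma/4}$, which blows up rather than producing $\delta^{\sigma/2}$. No choice of $h$ rescues this: taking $h$ larger spoils the bound on $\mathbb{E}[S_{\overline{\tau}}^{\eps,x_{0}}]$, while taking $h$ smaller shrinks the admissible $R$ even faster.

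The paper avoids this obstacle by abandoning fixed anchor points altogether and instead running a \emph{coupled} pair of processes $(X_{k},Y_{k})$ from $(x_{0},y_{0})$. The strategies $S_{I}^{y},S_{II}^{x}$ are chosen to mimic $S_{I}^{x},S_{II}^{y}$ so as to contract $|X_{k}-Y_{k}|$, and the anisotropic noise for $1<p<2$ is handled by the symmetrization identity for $\mathcal{P}_{\nu}\mathcal{D}\omega$ that leads to the key one-step bound
\[
\mathbb{E}\big[|X_{k+1}-Y_{k+1}|^{2}\,\big|\,\mathcal{F}_{k}\big]\le |X_{k}-Y_{k}|^{2}\big(1+C(s_{\eps}(X_{k})+s_{\eps}(Y_{k}))\big)+C\eps\big(s_{\eps}(X_{k})+s_{\eps}(Y_{k})\big).
\]
From this one builds the supermartingale $Q_{k}=|X_{k}-Y_{k}|^{2}e^{-C(S_{k}^{x_{0}}+S_{k}^{y_{0}})}-C\eps(S_{k}^{x_{0}}+S_{k}^{y_{0}})$, and optional stopping at $\overline{\tau}$ together with Lemma \ref{esttaus1inf} yields control of $\mathbb{E}[|X_{\overline{\tau}}-Y_{\overline{\tau}}|^{\sigma}]$. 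The interior estimate is then applied to the \emph{random} pair $(X_{\overline{\tau}},Y_{\overline{\tau}})$, both at depth $\ge h$, rather than to a comparison with a fixed point; this is exactly what circumvents the tangential-drift issue your argument runs into.
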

\begin{proof}

We prove the theorem in three steps. Observe the following decomposition
\begin{align}\label{esdecomp}\begin{split}
&|u_{\eps}(x_{0})-u_{\eps}(y_{0}) |
\\ & \le \big|\sup_{S_{I}}\inf_{S_{II}}\mathbb{E}_{S_{I},S_{II}}^{x_{0}}[u_{\eps}(X_{\overline{\tau}})]-u_{\eps}(x_{0})\big| +\big|\sup_{S_{I}}\inf_{S_{II}}\mathbb{E}_{S_{I},S_{II}}^{y_{0}}[u_{\eps}(Y_{\overline{\tau}})]-u_{\eps}(y_{0})\big|
\\ & \quad +\big| \sup_{S_{I}}\inf_{S_{II}}\mathbb{E}_{S_{I},S_{II}}^{x_{0}}[u_{\eps}(X_{\overline{\tau}})] -\sup_{S_{I}}\inf_{S_{II}}\mathbb{E}_{S_{I},S_{II}}^{y_{0}}[u_{\eps}(Y_{\overline{\tau}})] \big|.\end{split}
\end{align} 
In the first step, we estimate the first two terms of the right hand side in 
The second and third steps are devoted to giving an estimate of the last term.
We construct a supermartingale in the second step and obtain the desired estimate by using it in the last step.

We set $\eps>0$ small enough and $\overline{r}<r<\frac{\rho}{2}$.
Define  $$\{ Z^{\eps, \overline{r}, x_{0}}(X_{k}),Z^{\eps, \overline{r}, y_{0}}(Y_{k}),S_{k}^{\eps, x_{0}},S_{k}^{\eps, y_{0}} \}_{k=0}$$ with $\overline{\tau}^{\eps,\rho,h,x_{0}},\overline{\tau}^{\eps,\rho,h,y_{0}}$, respectively.
We also used the simplified notations $Z_{k}^{\eps, \overline{r}, x_{0}}, Z_{k}^{\eps, \overline{r}, y_{0}}$ and 
$\overline{\tau}.$

We fix $h=\delta^{\sigma/2}$ and consider the following sequence $\{ M_{k} \}_{k=0}$ given by 
\begin{align} \label{mgm} \begin{split} M_{k}&=u_{\eps}(X_{k})\prod_{i=1}^{k}\big( 1-(1-2d'_{X_{i}}) \gamma s_{\eps}(X_{i-1}) \big) \\ & \qquad \quad +\gamma 
\sum_{j=1}^{k-1}\bigg( (1-2d'_{X_{i}}) s_{\eps}(X_{j})G(X_{j})\prod_{i=1}^{j}\big( 1- \gamma s_{\eps}(X_{j-1})) \bigg) .
\end{split}
\end{align} 
Then we can observe that $M_{k}$ is a martingale with respect to $\{ \mathcal{F}_{k} \}_{k=0}$.
Since $M_{k\wedge \overline{\tau}}$
is equibounded by the definition of the value function, we can apply the optional stopping theorem. 
Thus, we have
\begin{align*}
u_{\eps}(x_{0})&=\sup_{S_{I}}\inf_{S_{II}}\mathbb{E}_{S_{I},S_{II}}^{x_{0}}\bigg[u_{\eps}(X_{\overline{\tau}})\prod_{i=1}^{\overline{\tau}}\big(  1-(1-2d'_{X_{i}}) \gamma s_{\eps}(X_{i-1})\big)\\ & \qquad \qquad \qquad \qquad  +\gamma 
\sum_{j=1}^{\overline{\tau}-1}\bigg( (1-2d'_{X_{i}}) s_{\eps}(X_{j})G(X_{j})\prod_{i=1}^{j}\big( 1- \gamma s_{\eps}(X_{j-1}) \bigg)\bigg].
\end{align*}
From the observation 
$$ \prod_{i=1}^{k}(1-a_{i}) \ge 1-\sum_{i=1}^{k}a_{i} $$
for every $k$ and $\{a_{i}\}_{i=1}^{k} \subset [0,1]$,
we get
$$|u_{\eps}(x_{0})-\sup_{S_{I}}\inf_{S_{II}}\mathbb{E}_{S_{I},S_{II}}^{x_{0}}[u_{\eps}(X_{\overline{\tau}})]| \le 2\gamma ||G||_{L^{\infty}(\Gamma_{\eps})} \overline{r}^{-q-1}C\delta^{\sigma/2}  ,$$
$$|u_{\eps}(y_{0})-\sup_{S_{I}}\inf_{S_{II}}\mathbb{E}_{S_{I},S_{II}}^{y_{0}}[u_{\eps}(Y_{\overline{\tau}})]| \le 2\gamma ||G||_{L^{\infty}(\Gamma_{\eps})} \overline{r}^{-q-1}C\delta^{\sigma/2}  ,$$
where $q>\max\{n-2, \frac{n-1}{a}-1 \}$.

Now for our desired result, we just need to get an estimate for $$\big| \sup_{S_{I}}\inf_{S_{II}}\mathbb{E}_{S_{I},S_{II}}^{x_{0}}[u_{\eps}(X_{\overline{\tau}})] -\sup_{S_{I}}\inf_{S_{II}}\mathbb{E}_{S_{I},S_{II}}^{y_{0}}[u_{\eps}(Y_{\overline{\tau}})] \big| .$$ 
Let the strategies for each player with the starting point at $x_{0}$ as $S_{I}^{x},S_{II}^{x}$.
We also write $S_{I}^{y},S_{II}^{y}$ for the strategies with starting point $y_{0}$.
Then we observe that
\begin{align}\label{foreas}\begin{split}
&\big| \sup_{S_{I}}\inf_{S_{II}}\mathbb{E}_{S_{I},S_{II}}^{x_{0}}[u_{\eps}(X_{\overline{\tau}})] -\sup_{S_{I}}\inf_{S_{II}}\mathbb{E}_{S_{I},S_{II}}^{y_{0}}[u_{\eps}(Y_{\overline{\tau}})] \big|
\\ & \le \sup_{S_{I}^{x}}\inf_{S_{II}^{x}}\inf_{S_{I}^{y}}\sup_{S_{II}^{y}}
\mathbb{E}_{S_{I}^{x},S_{II}^{x},S_{I}^{y},S_{II}^{y}}^{(x_{0},y_{0})}[|u_{\eps}(X_{\overline{\tau}})-u_{\eps}(Y_{\overline{\tau}})|] .
\end{split}\end{align}
and
\begin{align} \label{decomp}
\begin{split}
&\mathbb{E}_{S_{I}^{x},S_{II}^{x},S_{I}^{y},S_{II}^{y}}^{(x_{0},y_{0})}[|u_{\eps}(X_{\overline{\tau}})-u_{\eps}(Y_{\overline{\tau}})|] 
\\& = \mathbb{E}_{S_{I}^{x},S_{II}^{x},S_{I}^{y},S_{II}^{y}}^{(x_{0},y_{0})}[|u_{\eps}(X_{\overline{\tau}})-u_{\eps}(Y_{\overline{\tau}})|\mathds{1}_{\{S_{\overline{\tau}}^{\eps , x_{0}}+S_{\overline{\tau}}^{\eps , y_{0}}>1 \}}]
\\ & \ + \mathbb{E}_{S_{I}^{x},S_{II}^{x},S_{I}^{y},S_{II}^{y}}^{(x_{0},y_{0})}[|u_{\eps}(X_{\overline{\tau}})-u_{\eps}(Y_{\overline{\tau}})|\mathds{1}_{\{S_{\overline{\tau}}^{\eps , x_{0}}+S_{\overline{\tau}}^{\eps , y_{0}}\le1 \}}\mathds{1}_{\{|X_{\overline{\tau}}-Y_{\overline{\tau}}|\ge \frac{\delta_{0}}{2} \}}]
\\ & \ + \mathbb{E}_{S_{I}^{x},S_{II}^{x},S_{I}^{y},S_{II}^{y}}^{(x_{0},y_{0})}[|u_{\eps}(X_{\overline{\tau}})-u_{\eps}(Y_{\overline{\tau}})|\mathds{1}_{\{S_{\overline{\tau}}^{\eps , x_{0}}+S_{\overline{\tau}}^{\eps , y_{0}}\le1 \}}\mathds{1}_{\{|X_{\overline{\tau}}-Y_{\overline{\tau}}|< \frac{\delta_{0}}{2}  \}}].
\end{split}
\end{align}
We estimate the term $$ \sup_{S_{I}^{x}}\inf_{S_{II}^{x}}\inf_{S_{I}^{y}}\sup_{S_{II}^{y}}\mathbb{E}_{S_{I}^{x},S_{II}^{x},S_{I}^{y},S_{II}^{y}}^{(x_{0},y_{0})}[|X_{\overline{\tau}}-Y_{\overline{\tau}}|].$$
To do this, we first see
\begin{align*} \sup_{S_{I}^{x}}\inf_{S_{II}^{x}}\inf_{S_{I}^{y}}\sup_{S_{II}^{y}}
\mathbb{E}_{S_{I}^{x},S_{II}^{x},S_{I}^{y},S_{II}^{y}}^{(x_{0},y_{0})}\big[|X_{k+1}-Y_{k+1}|^{2}\big|\mathcal{F}_{k}\big]
\le |X_{k}-Y_{k}|^{2}
\end{align*}
since we can do cancellations for each strategy
when $\dist(X_{k},\partial \Omega),\dist(Y_{k},\partial \Omega)\ge\frac{\eps}{2}$.

Next we consider the other case, that is, $\min\{\dist(X_{k},\partial \Omega),\dist(Y_{k},\partial \Omega)\}<\frac{\eps}{2}$. 
Assume that Player I won the coin toss and let $\nu_{x}\in S^{n-1}$ for the strategy of $X_{k}$. 
For simplicity, we use the notations $X_{k,\nu_{x}}=X_{k}+Q\eps d'_{X_{k}}\nu_{x}$ and 
$Y_{k,\nu_{y}}=Y_{k}+Q\eps d'_{Y_{k}}\nu_{y} $.
When $|X_{k}-Y_{k}|\ge Q\eps (d'_{X_{k}}+d'_{Y_{k}})$,
we take $\nu_{y}$ for the strategy of $Y_{k}$ such that 
$$\nu_{y}=\frac{X_{k,\nu_{x}}-Y_{k}}{|X_{k,\nu_{x}}-Y_{k}|}.$$
For each sample path $\omega$,  
we observe that
\begin{align*}&\int_{B_{1}}\big|(X_{k,\nu_{x}}+QA\eps d'_{X_{k-1}}\mathcal{P}_{\nu_{x}}\mathcal{D}\omega )-(Y_{k,\nu_{y}}+QA\eps d'_{Y_{k-1}}\mathcal{P}_{\nu_{y}}\mathcal{D}\omega )\big|^{2}  d\mathbb{P}(\omega)
\\&
=\int_{B_{1}}\big|(X_{k,\nu_{x}}-QA\eps d'_{X_{k-1}}\mathcal{P}_{\nu_{x}}\mathcal{D}\omega )-(Y_{k,\nu_{y}}-QA\eps d'_{Y_{k-1}}\mathcal{P}_{\nu_{y}}\mathcal{D}\omega )\big|^{2}  d\mathbb{P}(\omega),
\end{align*}
where $\mathbb{P}$ is the probability measure associated with the uniform distribution in $B_{1}$,  
$\mathcal{P}_{\nu}$ is a rotation satisfying $\mathcal{P}_{\nu}\mathbf{e}_{1}=\nu$ for each $\nu\in S^{n-1}$ and
\eqref{conrot}
and $\mathcal{D}$ is a a linear transformation with $\mathcal{D}\mathbf{e}_{1}=a\mathbf{e}_{1} $ and $ \mathcal{D}\mathbf{e}_{j}=\mathbf{e}_{j} $ for $ j=2,\dots,n.$  
This yields
\begin{align*}&\int_{B_{1}}\big|(X_{k,\nu_{x}}+QA\eps d'_{X_{k-1}}\mathcal{P}_{\nu_{x}}\mathcal{D}\omega )-(Y_{k,\nu_{y}}+QA\eps d'_{Y_{k-1}}\mathcal{P}_{\nu_{y}}\mathcal{D}\omega )\big|^{2}  d\mathbb{P}(\omega)
\\&
=\frac{1}{2}\int_{B_{1}}\big(\big|(X_{k,\nu_{x}}+QA\eps d'_{X_{k-1}}\mathcal{P}_{\nu_{x}}\mathcal{D}\omega )-(Y_{k,\nu_{y}}+QA\eps d'_{Y_{k-1}}\mathcal{P}_{\nu_{y}}\mathcal{D}\omega )\big|^{2}  \\ &\qquad \qquad + \big|(X_{k,\nu_{x}}-QA\eps d'_{X_{k-1}}\mathcal{P}_{\nu_{x}}\mathcal{D}\omega )-(Y_{k,\nu_{y}}-QA\eps d'_{Y_{k-1}}\mathcal{P}_{\nu_{y}}\mathcal{D}\omega )\big|^{2}\big) \mathbb{P}(\omega).
\end{align*}
Since
\begin{align*}&\frac{1}{2}\big(\big|(X_{k,\nu_{x}}+QA\eps d'_{X_{k-1}}\mathcal{P}_{\nu_{x}}\mathcal{D}\omega )-(Y_{k,\nu_{y}}+QA\eps d'_{Y_{k-1}}\mathcal{P}_{\nu_{y}}\mathcal{D}\omega )\big|^{2}  \\ &\qquad  + \big|(X_{k,\nu_{x}}-QA\eps d'_{X_{k-1}}\mathcal{P}_{\nu_{x}}\mathcal{D}\omega )-(Y_{k,\nu_{y}}-QA\eps d'_{Y_{k-1}}\mathcal{P}_{\nu_{y}}\mathcal{D}\omega )\big|^{2}\big) 
\\& =|X_{k,\nu_{x}}-Y_{k,\nu_{y}}|^{2}+(QA\eps)^{2}| d'_{X_{k-1}}\mathcal{P}_{\nu_{x}}\mathcal{D}\omega -d'_{Y_{k-1}}\mathcal{P}_{\nu_{y}}\mathcal{D}\omega|^{2}
\\ & \le |X_{k,\nu_{x}}-Y_{k,\nu_{y}}|^{2}+(QA\eps)^{2}|\omega|^{2},
\end{align*}
we get 
\begin{align*}
&\int_{B_{1}}\big|(X_{k,\nu_{x}}+QA\eps d'_{X_{k-1}}\mathcal{P}_{\nu_{x}}\mathcal{D}\omega )-(Y_{k,\nu_{y}}+QA\eps d'_{Y_{k-1}}\mathcal{P}_{\nu_{y}}\mathcal{D}\omega )\big|^{2}  d\mathbb{P}(\omega)\\ & \le  |X_{k,\nu_{x}}-Y_{k,\nu_{y}}|^{2}+(QA\eps)^{2}.
\end{align*}
Thus, we have 
$$\sup_{S_{I}^{x}}\inf_{S_{I}^{y}}
\mathbb{E}_{S_{I}^{x},S_{I}^{y}}^{(x_{0},y_{0})}\big[|X_{k+1}-Y_{k+1}|^{2}\big|\mathcal{F}_{k}\big]\le (|X_{k}-Y_{k}|+Q\eps(d'_{X_{k}}-d'_{Y_{k}}))^{2}+(QA\eps)^{2}.
$$
We similarly derive
$$
\inf_{S_{II}^{x}}\sup_{S_{II}^{y}}\mathbb{E}_{S_{II}^{x},S_{II}^{y}}^{(x_{0},y_{0})}\big[|X_{k+1}-Y_{k+1}|^{2}\big|\mathcal{F}_{k}\big]\le \big(|X_{k}-Y_{k}|+Q\eps(d'_{Y_{k}}-d'_{X_{k}})\big)^{2}+(QA\eps)^{2}.
$$
Meanwhile, for the random walk, one can find the following estimate
\begin{align*}
&\mathbb{E}[|X_{k+1}-Y_{k+1}|^{2}\mathds{1}_{ \{\frac{\alpha}{2}  < \xi_{k} <  1-\frac{\alpha}{2}\}}|\mathcal{F}_{k}\}]
\\&\le (1-\alpha) \big(|X_{k}-Y_{k}|^{2}\big(1+ C(s_{\eps}(X_{k})+s_{\eps}(Y_{k}))\big)+C\eps(s_{\eps}(X_{k})+s_{\eps}(Y_{k}))\big)
\end{align*} for some $C>0$ in the proof of \cite[Theorem 8.1]{MR4684385}.
Now we have
\begin{align*}
& \sup_{S_{I}^{x}}\inf_{S_{II}^{x}}\inf_{S_{I}^{y}}\sup_{S_{II}^{y}}
\mathbb{E}_{S_{I}^{x},S_{II}^{x},S_{I}^{y},S_{II}^{y}}^{(x_{0},y_{0})}\big[|X_{k+1}-Y_{k+1}|^{2}\big|\mathcal{F}_{k}\big]
\\ &\le \frac{\alpha}{2}\big(  |X_{k}-Y_{k}|+\eps (d'_{X_{k}}-d'_{Y_{k}})\big)^{2}
+\frac{\alpha}{2}\big(  |X_{k}-Y_{k}|+\eps (d'_{Y_{k}}-d'_{X_{k}})\big)^{2}+\alpha (QA\eps)^{2}
\\ & \qquad + (1-\alpha) \big( |X_{k}-Y_{k}|^{2}\big(1+ C(s_{\eps}(X_{k})+s_{\eps}(Y_{k}))\big)+C\eps(s_{\eps}(X_{k})+s_{\eps}(Y_{k})))\big) 
\\ & \le |X_{k}-Y_{k}|^{2}\big(1+C(1-\alpha)(s_{\eps}(X_{k})+s_{\eps}(Y_{k}))\big)\\ &\qquad +
(1-\alpha) C\eps(s_{\eps}(X_{k})+s_{\eps}(Y_{k})))+\eps^{2} (d'_{X_{k}}-d'_{Y_{k}})^{2}+\alpha (QA\eps)^{2}.
\end{align*}
We also check that $|d'_{X_{k}}-d'_{Y_{k}}|\le \frac{1}{2}$, and this implies 
$$\eps^{2} (d'_{X_{k}}-d'_{Y_{k}})^{2}+\alpha (QA\eps)^{2} \le C_{0}\eps (s_{\eps}(X_{k})+s_{\eps}(Y_{k})) $$
with some universal $C_{0}>0 $.
Thus, we can obtain 
\begin{align}\label{expbd} \begin{split}
& \sup_{S_{I}^{x}}\inf_{S_{II}^{x}}\inf_{S_{I}^{y}}\sup_{S_{II}^{y}}
\mathbb{E}_{S_{I}^{x},S_{II}^{x},S_{I}^{y},S_{II}^{y}}^{(x_{0},y_{0})}\big[|X_{k+1}-Y_{k+1}|^{2}\big|\mathcal{F}_{k}\big]
\\ & \le |X_{k}-Y_{k}|^{2}\big(1+C(s_{\eps}(X_{k})+s_{\eps}(Y_{k}))\big)+ C\eps(s_{\eps}(X_{k})+s_{\eps}(Y_{k}))).\end{split}
\end{align}
On the other hand, if $|X_{k}-Y_{k}|<Q\eps (d'_{X_{k}}+d'_{Y_{k}})$, we see
$$|X_{k+1}-Y_{k+1}|\le (2+A)Q\eps (d'_{X_{k}}+d'_{Y_{k}}) \le (2+A)Q\eps. $$ 
Thus, we have
\begin{align*}
& \sup_{S_{I}^{x}}\inf_{S_{II}^{x}}\inf_{S_{I}^{y}}\sup_{S_{II}^{y}}
\mathbb{E}_{S_{I}^{x},S_{II}^{x},S_{I}^{y},S_{II}^{y}}^{(x_{0},y_{0})}\big[|X_{k+1}-Y_{k+1}|^{2}\big|\mathcal{F}_{k}\big]
\\ &\le \alpha (Q(2+A)\eps)^{2}
\\ & \qquad + (1-\alpha)  |X_{k}-Y_{k}|^{2}\big(1+ C(s_{\eps}(X_{k})+s_{\eps}(Y_{k}))\big)+C\eps(s_{\eps}(X_{k})+s_{\eps}(Y_{k})))\big) 
\\ & \le 
 C\eps(s_{\eps}(X_{k})+s_{\eps}(Y_{k})))+(1-\alpha)\eps^{2} +\alpha (QA\eps)^{2}
 \\ & \le   C\eps(s_{\eps}(X_{k})+s_{\eps}(Y_{k})))+(1-\alpha)\eps^{2}
\end{align*}
by choosing $C>0$ large enough. 
We have also used that $\eps^{2} \le C_{0}\eps (s_{\eps}(X_{k})+s_{\eps}(Y_{k})) $.

We can construct a supermartingale given by
$$ Q_{k}=|X_{k}-Y_{k}|^{2} e^{-C(S_{k}^{x_{0}}+S_{k}^{y_{0}})}-C\eps (S_{k}^{x_{0}}+S_{k}^{y_{0}}).$$
By employing the optional stopping theorem, we obtain
\begin{align*}
\delta^{2}\ge \sup_{S_{I}^{x}}\inf_{S_{II}^{x}}\inf_{S_{I}^{y}}\sup_{S_{II}^{y}}\mathbb{E}_{S_{I}^{x},S_{II}^{x},S_{I}^{y},S_{II}^{y}}^{(x_{0},y_{0})}[|X_{\overline{\tau}}-Y_{\overline{\tau}}|^{2} e^{-C(S_{\overline{\tau}}^{\eps , x_{0}}+S_{\overline{\tau}}^{\eps , y_{0}})}-C\eps (S_{\overline{\tau}}^{\eps , x_{0}}+S_{\overline{\tau}}^{\eps , y_{0}})].
\end{align*}
We remark that Lemma \ref{esttaus1inf} implies
\begin{align*}
\sup_{S_{I}^{x}}\sup_{S_{I}^{y}}\mathbb{E}_{S_{I}^{x},S_{II}^{x,\ast},S_{I}^{y},S_{II}^{y,\ast}}^{(x_{0},y_{0})}[S_{\overline{\tau}}^{\eps , x_{0}}+S_{\overline{\tau}}^{\eps , y_{0}}]\le C\delta^{1/2},
\end{align*}
and thus
\begin{align}\label{proes}\mathbb{P}(S_{\overline{\tau}}^{\eps , x_{0}}+S_{\overline{\tau}}^{\eps , y_{0}}> 1)\le C\delta^{1/2}. \end{align}
Then we observe the following (cf. \cite[Theorem 4.4]{han2024tug}): 
\begin{align*}
\sup_{S_{I}^{x}}\inf_{S_{II}^{x}}\inf_{S_{I}^{y}}\sup_{S_{II}^{y}}\mathbb{E}_{S_{I}^{x},S_{II}^{x},S_{I}^{y},S_{II}^{y}}^{(x_{0},y_{0})}[|X_{\overline{\tau}}-Y_{\overline{\tau}}|^{2}\mathds{1}_{\{S_{\overline{\tau}}^{\eps , x_{0}}+S_{\overline{\tau}}^{\eps , y_{0}}\le1 \}}]
\le C\delta^{2\sigma}
\end{align*}
and this yields
\begin{align} \label{dtsmes1inf}
\sup_{S_{I}^{x}}\inf_{S_{II}^{x}}\inf_{S_{I}^{y}}\sup_{S_{II}^{y}}\mathbb{E}_{S_{I}^{x},S_{II}^{x},S_{I}^{y},S_{II}^{y}}^{(x_{0},y_{0})}[|X_{\overline{\tau}}-Y_{\overline{\tau}}|^{\sigma}\mathds{1}_{\{S_{\overline{\tau}}^{\eps , x_{0}}+S_{\overline{\tau}}^{\eps , y_{0}}\le1 \}}]\le C\delta^{\sigma} ,
\end{align}
provided $\eps << \delta$.
The following estimate
\begin{align}
\mathbb{P}\bigg( \{S_{\overline{\tau}}^{\eps , x_{0}}+S_{\overline{\tau}}^{\eps , y_{0}}\le 1 \}\cap
\bigg\{ |X_{\overline{\tau}}-Y_{\overline{\tau}}| \ge \frac{\delta_{0}}{2}\bigg\} \bigg) \le  \frac{C\delta}{\delta_{0}}
\end{align}
is also obtained by combining \eqref{proes} with \eqref{dtsmes1inf}.

Then we can obtain
\begin{align*}
&\sup_{S_{I}^{x}}\inf_{S_{II}^{x}}\inf_{S_{I}^{y}}\sup_{S_{II}^{y}}\mathbb{E}_{S_{I}^{x},S_{II}^{x},S_{I}^{y},S_{II}^{y}}^{(x_{0},y_{0})}[|u_{\eps}(X_{\overline{\tau}})-u_{\eps}(Y_{\overline{\tau}})|\mathds{1}_{\{S_{\overline{\tau}}^{\eps , x_{0}}+S_{\overline{\tau}}^{\eps , y_{0}}\le1 \}}\mathds{1}_{\{|X_{\overline{\tau}}-Y_{\overline{\tau}}|< \frac{\delta_{0}}{2}  \}}]
\\ & \le C  ||G||_{L^{\infty}(\Gamma_{\eps})}\delta^{\sigma/2}.
\end{align*}
Finally, we have
\begin{align*}
&\sup_{S_{I}^{x}}\inf_{S_{II}^{x}}\inf_{S_{I}^{y}}\sup_{S_{II}^{y}}\mathbb{E}_{S_{I}^{x},S_{II}^{x},S_{I}^{y},S_{II}^{y}}^{(x_{0},y_{0})}[|u_{\eps}(X_{\overline{\tau}})-u_{\eps}(Y_{\overline{\tau}})|] \le C  ||G||_{L^{\infty}(\Gamma_{\eps})}\delta^{\sigma/2}.
\end{align*}
This completes the proof.
\end{proof}

\subsection{Convergence} 
We consider the convergence issue of value functions as $\eps \to 0$ in this subsection. 
Before proving the convergence, we first introduce the notion of viscosity solutions.

\begin{definition}[viscosity solution] \label{viscosity}
A function $u \in C(\overline{\Omega})$ is a viscosity solution to \eqref{defcapf} if
for all $x \in \overline{\Omega}$ and $\phi \in C^{2}$ such that $u(x)=\phi(x) $ and $u(y)> \phi(y) $ for $y \neq x$, we have
\begin{align*}
\left\{ \begin{array}{ll}
 \Delta_{p}^{N}\phi(x) \le 0 & \textrm{if $ x \in \Omega$,}\\
\min\big\{ \Delta_{p}^{N}\phi(x) , \gamma_{0} G(x)-\big( \langle\mathbf{n} , D\phi\rangle (x)  + \gamma_{0} \phi(x)\big) \big\} \le 0 & \textrm{if $ x \in \partial \Omega$,}\\
\end{array} \right.
\end{align*}
and 
for all $x \in \overline{\Omega}$ and $\phi \in C^{2}$ such that $u(x)=\phi(x) $ and $u(y)< \phi(y) $ for $y \neq x$, we have
\begin{align*}
\left\{ \begin{array}{ll}
 \Delta_{p}^{N}\phi (x)\ge 0 & \textrm{if $ x \in \Omega$,}\\
\max\big\{\Delta_{p}^{N}\phi(x),  \gamma_{0} G(x)-\big( \langle\mathbf{n} , D\phi\rangle(x)  + \gamma_{0} \phi(x)\big)  \big\}\ge 0   & \textrm{if $ x \in \partial \Omega$.}\\
\end{array} \right.
\end{align*}
\end{definition}

Then we can show the convergence of the value function to a viscosity solution of \eqref{defcapf}.
\begin{theorem} \label{convrob12}
Let $u_{\eps}$ be the function satisfying \eqref{dppext} and $G \in C^{1}(\overline{\Gamma}_{\eps})$ for each $\eps >0$.
Then, there exists a continuous function $u: \overline{\Omega} \to \mathbb{R}^{n}$ and
a subsequence $\{  \eps_{i}\}$ such that $u_{\eps_{i}}$ converges uniformly to $u$ on $\overline{\Omega}$ and
$u$ is a viscosity solution to the problem \eqref{defcapf}.
\end{theorem}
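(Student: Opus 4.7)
The plan is to proceed in two stages: first extract a uniformly convergent subsequence of $\{u_\eps\}$, then show that any subsequential limit satisfies the viscosity conditions of Definition \ref{viscosity}, both in $\Omega$ and on $\partial\Omega$.

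\textbf{Compactness.} Uniform boundedness $\|u_\eps\|_{L^\infty(\overline{\Omega})}\le \|G\|_{L^\infty(\Gamma_\eps)}$ follows from iterating \eqref{dppext}, since each substitution expresses $u_\eps(x)$ as a convex combination of previously attained values and boundary data. Asymptotic equicontinuity then comes from concatenating Lemma \ref{intlipext} with Theorem \ref{bdryregext}: a case-split based on whether the two points are both far from $\partial\Omega$, both close, or one of each, gives an estimate of the form $|u_\eps(x)-u_\eps(y)|\le C\|G\|_{L^\infty(\Gamma_\eps)}(|x-y|^{\sigma/2}+\eps^{\sigma/2})$ on $\overline{\Omega}$. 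A standard Arzel\`a--Ascoli-type statement for asymptotically equicontinuous families then produces a subsequence $u_{\eps_i}\to u$ uniformly on $\overline{\Omega}$ with $u\in C(\overline{\Omega})$.

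\textbf{Interior viscosity property.} Fix $x_0\in\Omega$ and a $C^2$ test function $\phi$ touching $u$ strictly from below at $x_0$ (i.e.\ $u(y)>\phi(y)$ for $y\neq x_0$). Uniform convergence produces local minimizers $x_\eps\to x_0$ of $u_\eps-\phi$, with $\dist(x_\eps,\partial\Omega)\ge\eps$ for $\eps$ small. Then \eqref{dppext} reduces to $u_\eps(x_\eps)=\mathcal{A}u_\eps(x_\eps)$, and the minimality inequality $\phi+(u_\eps(x_\eps)-\phi(x_\eps))\le u_\eps$ combined with the monotonicity of $\mathcal{A}$ yields $\mathcal{A}\phi(x_\eps)-\phi(x_\eps)\le 0$. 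Invoking the expansion \eqref{add5} (the choice of $\alpha$ in \eqref{aaqdef} collapses the $\Delta_{p_0}^N\phi$ and $\Delta\phi$ contributions into a positive multiple of $\Delta_p^N\phi$), dividing by $\eps^2$ and letting $\eps\to 0$ yields $\Delta_p^N\phi(x_0)\le 0$. The degenerate case $D\phi(x_0)=0$ is handled by the standard semicontinuous-envelope or quadratic-perturbation argument. The supersolution direction is symmetric.

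\textbf{Boundary viscosity property.} This is the main obstacle. For $x_0\in\partial\Omega$ and $\phi$ touching $u$ strictly from below, take minimizers $x_\eps\to x_0$ as above. If $\dist(x_\eps,\partial\Omega)\ge c\eps$ along a subsequence, the interior argument gives $\Delta_p^N\phi(x_0)\le 0$ and the $\min$-condition at $x_0$ is automatically verified. Otherwise $\dist(x_\eps,\partial\Omega)/\eps\to 0$, and I would use the full DPP \eqref{dppext}. Combining the minimality inequality $\mathcal{A}\phi(x_\eps)-\phi(x_\eps)\le \mathcal{A}u_\eps(x_\eps)-u_\eps(x_\eps)$ with the identity $u_\eps(x_\eps)-\mathcal{A}u_\eps(x_\eps)=(1-\alpha)\gamma_0 s_\eps(x_\eps)(G(x_\eps)-\mathcal{A}u_\eps(x_\eps))$ from \eqref{dppext}, and substituting the near-boundary expansion
\begin{align*}
\phi(x_\eps)-\mathcal{A}\phi(x_\eps)=(1-\alpha)s_\eps(x_\eps)\langle D\phi(x_\eps),\mathbf{n}\rangle+O(\eps s_\eps(x_\eps))+O(\eps^2)
\end{align*}
(which follows from \eqref{bdryintav} and the computation behind \eqref{add6}), one divides by $(1-\alpha)s_\eps(x_\eps)\asymp \eps$ and sends $\eps\to 0$ to obtain
\begin{align*}
\langle \mathbf{n},D\phi(x_0)\rangle + \gamma_0\phi(x_0) - \gamma_0 G(x_0)\ge 0,
\end{align*}
which is exactly the second alternative in the $\min$-condition of Definition \ref{viscosity}. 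The supersolution ($\max$) direction is symmetric. The principal technical subtlety throughout is the bookkeeping of the error scales $O(\eps s_\eps(x))$ and $o(\eps^2)$ against the leading scale $s_\eps(x)\asymp \eps(1-d_\eps(x)^2)^{(n+1)/2}$, which can degenerate within the boundary layer and forces the case-split above so that the appropriate viscosity inequality emerges in each regime.
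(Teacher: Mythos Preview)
Your approach is essentially the paper's: compactness from the regularity estimates followed by an Arzel\`a--Ascoli argument, then verification of the viscosity inequalities by passing to the limit in the DPP at (approximate) minimizers $x_\eps$ of $u_\eps-\phi$, with a case-split at the boundary according to how $x_\eps$ approaches $\partial\Omega$. The paper defers the interior case to \cite[Corollary 7.3]{MR4257616} and the final boundary limit to \cite{han2024tug}; you spell out more of both directly.

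One point to sharpen: your boundary Case~1 asserts that the interior argument gives $\Delta_p^N\phi(x_0)\le 0$ whenever $\dist(x_\eps,\partial\Omega)\ge c\eps$ along a subsequence, but for $c\in(0,1)$ the DPP still carries the boundary term $(1-\alpha)\gamma_0 s_\eps(x_\eps)(G-\mathcal{A}u_\eps)$ and the expansion of $\mathcal{A}\phi$ still contains the drift $-(1-\alpha)s_\eps(x_\eps)\langle D\phi,\mathbf{n}\rangle$; both are of order $\eps$ and dominate the $O(\eps^2)$ Laplacian term, so the purely interior conclusion does not follow. The paper splits instead at $\dist(x_\eps,\partial\Omega)=\eps$: either a subsequence lies outside $\Gamma_\eps$ (genuine interior case, $s_\eps=0$) or eventually $x_\eps\in\Gamma_\eps$, and in the latter it divides through by $s_\eps$ exactly as you do in your Case~2. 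If you keep your dichotomy, you need to check separately that in the intermediate regime $\dist(x_\eps,\partial\Omega)/\eps\to t\in(0,1]$ the combined inequality still forces the $\min$-condition. Also, since $u_\eps$ need not be continuous for fixed $\eps$, exact minimizers of $u_\eps-\phi$ may not exist; work with $\zeta_\eps$-approximate minimizers as the paper does in \eqref{lu2}--\eqref{zetaep12}.
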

\begin{proof}
For the interior case, we can prove the convergence by using \cite[Corollary 7.3]{MR4257616}.
Thus, we only need to consider the boundary case.
First, we fix
$x \in \partial \Omega$ and $\phi \in C^{2}(\overline{\Omega} )$
such that $u-\phi$ attains a strict local minimum at $x$
(we use the notation $\{ u_{\eps} \}$ instead of $\{ u_{\eps_{i}} \}$ for simplicity).
Since $u_{\eps} $ uniformly converges to $u$,  we have
\begin{align}\label{lu1}\inf_{B_{r}(x)}(u_{\eps}-\phi) < u_{\eps}(z)-\phi(z) 
\end{align}
for all $z\in B_{r}(x)\backslash \{ x\}$ for sufficiently small $\eps >0$.
Hence, we can choose a point $x_{\eps} \in B_{r}(x) \cap \overline{\Omega}$ for any $ \zeta_{\eps}>0  $ such that
\begin{align}\label{lu2} u_{\eps}(x_{\eps})-\phi(x_{\eps}) \le  u_{\eps}(z)-\phi(z)+ \zeta_{\eps}  \end{align}
for any $z \in B_{r}(x)$ and small $\eps >0$.
Let $\varphi = \phi +  u_{\eps}(x_{\eps})-\phi(x_{\eps})$.
Since we can observe that $x_{\eps} \to x$ as $\eps \to 0$,
we have $\varphi (x_{\eps})= u_{\eps}(x_{\eps})$ and
\begin{align}\label{lu3}u_{\eps}(z) \ge u_{\eps}(x_{\eps})-\phi(x_{\eps})+\phi(z)-\zeta_{\eps} = \varphi(z)-\zeta_{\eps}
\end{align}
for each $z \in B_{r}(x)$.

Set 
$\tilde{T}_{\eps}^{G}u(x) =\big( 1-\gamma s_{\eps}(x) \big) \mathcal{A}u_{\eps}(x) +
\gamma s_{\eps}(x) G(x)$
with $\gamma = (1-\alpha)\gamma_{0}$.
Since $u_{\eps}$ satisfies $u_{\eps}= \tilde{T}_{\eps}^{G}u_{\eps} $ and
\begin{align*}u_{\eps}(z) \ge  \varphi(z)-\zeta_{\eps}
\end{align*}
for each $z \in B_{r}(x)$, we obtain that
\begin{align} \label{zetaep12}
\zeta_{\eps} \ge  \tilde{T}_{\eps}^{G}\phi (x_{\eps})- \phi (x_{\eps}) +\gamma  s_{\eps}(x_{\eps}) \Lambda_{\eps}
\end{align}
with $ \Lambda_{\eps}=\zeta_{\eps}+\phi (x_{\eps}) -u_{\eps}(x_{\eps}) $.

Now we assume that there is an infinite sequence $\{x_{\eps_{j}}\} $ such that $x_{\eps_{j}} \in \Gamma_{\eps_{j}}$ for each $j$ (for simplicity, we maintain the notation $x_{\eps}$). 
Recall that
\begin{align*}\mathcal{A}\phi(x)=&\frac{\alpha}{2} \bigg\{\sup_{z\in \partial B_{Q\eps d'_{x}}(x)}\kint_{E_{QA\eps d'_{x}}^{a}(z;\nu_{0})} \hspace{-2em}\phi(y)dy+\inf_{z\in \partial B_{Q\eps d'_{x}}(x)} \kint_{E_{QA\eps d'_{x}}^{a}(z;\nu_{0})}\hspace{-2em}\phi(y)dy \bigg\}\\ & \quad + (1- \alpha)\kint_{B_{\eps}(x)\cap \Omega}u_{\eps}(y)dy.
\end{align*}
We first observe that
\begin{align}
\label{tay1}
\begin{split}&\frac{1}{2}\bigg\{\kint_{E_{QA\eps/2}^{a}(x_{\eps}+w;\nu_{0})}\phi(y)dy+ \kint_{E_{QA\eps/2}^{a}(x_{\eps}-w;\nu_{0})} \phi(y)dy \bigg\}
\\& =  \frac{1}{2} \big( \phi(x_{\eps}+w)+\phi(x_{\eps}-w) \big) +O(\eps^{2})=\phi(x_{\eps})+O(\eps^{2})\end{split}
\end{align} for any $w \in \partial B_{Q\eps /2}$
by using Taylor expansion. 
Combining this with
\begin{align*}   \kint_{B_{\eps}(x_{\eps})\cap \Omega}\phi(y)dy  
& =\phi(x_{\eps})+\bigg\langle D\phi(x_{\eps}),  \kint_{B_{\eps}(x_{\eps})\cap \Omega }(y-x_{\eps})dy \bigg\rangle
\\ &\quad + \frac{1}{2}\bigg\langle D^{2}\phi(x_{\eps}):\kint_{B_{\eps}(x_{\eps})\cap \Omega }(y-x_{\eps})\otimes(y-x_{\eps})dy\bigg\rangle + o(\eps^{2})
\\& =    \phi(x_{\eps})  -s_{\eps}(x_{\eps}) \langle D\phi(x_{\eps}), \mathbf{n}(\pi_{\partial \Omega}x_{\eps})\rangle  + O(\eps s_{\eps}(x_{\eps})  ),
\end{align*} 
we get
\begin{align*}\begin{split}
&\big( 1-\gamma s_{\eps}(x_{\eps}) \big) \mathcal{A}u_{\eps}(x_{\eps}) +
\gamma s_{\eps}(x_{\eps}) G(x_{\eps})\\ &=
\big( 1-\gamma s_{\eps}(x_{\eps}) \big)\times 
 \\ & \ \bigg\{ \frac{\alpha}{2}\kint_{E_{QA\eps/2}^{a}(x_{\eps}+w;\nu_{0})}\hspace{-1.5em}\phi(y)dy+ \frac{\alpha}{2}\kint_{E_{QA\eps/2}^{a}(x_{\eps}-w;\nu_{0})} \hspace{-1.5em}\phi(y)dy + (1-\alpha) \kint_{B_{\eps}(x_{\eps})\cap \Omega}\phi(y) dy \bigg\}
 \\& \qquad + \gamma  s_{\eps}(x_{\eps})G(x_{\eps}) 
 \\ & = (1-\gamma s_{\eps}(x_{\eps}))   \big( \phi(x_{\eps})   -s_{\eps}(x_{\eps}) (1-\alpha)\langle D\phi(\pi_{\partial \Omega}x_{\eps}), \mathbf{n}(\pi_{\partial \Omega}x_{\eps})\rangle \big)
 \\ & \qquad +\gamma  s_{\eps}(x_{\eps})G(x_{\eps}) + O(\eps s_{\eps}(x_{\eps})  ).
 \end{split}
\end{align*}

Since $\phi \in C^{2}(\overline{\Omega})$, we can always choose $w_{1} \in \partial B_{Q\eps /2}$ be a unit vector such that
$$ \kint_{E_{QA\eps/2}^{a}(x_{\eps}+w_{1};\nu_{0})} \phi(y)dy = \inf_{z\in \partial B_{Q\eps d'_{x_{\eps}}}(x_{\eps})} \kint_{E_{QA\eps/2}^{a}(z;\nu_{0})} \phi(y)dy  .  $$
This implies
\begin{align*}\begin{split}& \frac{\alpha}{2} \bigg\{\sup_{z\in \partial B_{Q\eps d'_{x_{\eps}}}(x_{\eps})}\kint_{E_{QA\eps d'_{x_{\eps}}}^{a}(z;\nu_{0})} \hspace{-2em}\phi(y)dy+\inf_{z\in \partial B_{Q\eps d'_{x_{\eps}}}(x_{\eps})} \kint_{E_{QA\eps d'_{x_{\eps}}}^{a}(z;\nu_{0})}\hspace{-2em}\phi(y)dy \bigg\}\\ & \quad + (1- \alpha)\kint_{B_{\eps}(x_{\eps})\cap \Omega}u_{\eps}(y)dy
\\ & \ge \frac{\alpha}{2}\kint_{E_{QA\eps/2}^{a}(x_{\eps}+w_{1};\nu_{0})}\hspace{-1.5em}\phi(y)dy+ \frac{\alpha}{2}\kint_{E_{QA\eps/2}^{a}(x_{\eps}-w_{1};\nu_{0})} \hspace{-1.5em}\phi(y)dy + (1-\alpha) \kint_{B_{\eps}(x_{\eps})\cap \Omega}\phi(y) dy .
\end{split}
\end{align*}
Recall that 
$$\tilde{T}_{\eps}^{G}u(x) =\big( 1-\gamma s_{\eps}(x) \big) \mathcal{A}u_{\eps}(x) +
\gamma s_{\eps}(x) G(x).$$
Then we have
\begin{align*} \tilde{T}_{\eps}^{G}\phi (x_{\eps})
& \ge  \phi(x_{\eps})  + \big(\gamma  G(x_{\eps})- (\gamma \phi(x_{\eps})+  (1-\alpha)\langle D\phi(\pi_{\partial \Omega}x_{\eps}), \mathbf{n}(\pi_{\partial \Omega}x_{\eps})\rangle ) \big) s_{\eps} (x_{\eps}) \\ & \qquad + O(\eps s_{\eps}(x_{\eps})  ) .
\end{align*} 
Thus, we obtain \begin{align}\label{bdconde}\begin{split}&  \zeta_{\eps} \ge  \big(\gamma  G(x_{\eps})- (\gamma \phi(x_{\eps})+(1-\alpha)  \langle D\phi(\pi_{\partial \Omega}x_{\eps}), \mathbf{n}(\pi_{\partial \Omega}x_{\eps})\rangle ) \big)s_{\eps} (x_{\eps})\\ & \qquad + \gamma  s_{\eps}(x_{\eps}) \Lambda_{\eps} + O(\eps s_{\eps}(x_{\eps})  ),\end{split}
\end{align} and this yields
\begin{align*}0& \ge  \gamma  G(x)- \gamma  \phi(x)- (1-\alpha)\langle D\phi(x), \mathbf{n}(x)\rangle 
\end{align*}by the same calculation for the $p>2$ case (see the proof of \cite[Theorem 5.2]{han2024tug}).
 Hence we have 
$$ \frac{\partial \phi}{\partial \mathbf{n}}(x)+\gamma_{0}  \phi(x) \ge \gamma_{0} G(x).$$
The reversed inequality can be obtained in much the same way and we get the desired result.
\end{proof}


\end{document}